\documentclass[a4paper,11pt, reqno]{amsart}
\usepackage[]{hyperref}
\usepackage{a4wide}
\usepackage{amsthm}
\usepackage{amssymb}
\usepackage{palatino}
\usepackage{enumitem}
\usepackage{relsize}
\usepackage{mathabx}
\setcounter{tocdepth}{1}
\usepackage{tcolorbox}
\usepackage{amssymb}
\newtheorem{thm}{Theorem}

\newtheorem{definition}{Definition}

\newtheorem{corollary}{Corollary}[section]
\newtheorem{lemma}{Lemma}

\newtheorem{remark}{Remark}
\makeatletter
\newcommand{\sqf}[1]{\ \mathrm{sqf}\ #1}
\newcommand{\Mod}[1]{\ (\mathrm{mod}\ #1)}
\newcommand*{\rom}[1]{\expandafter\@slowromancap\romannumeral #1@}

\newcommand{\Gal}[1]{\text{Gal}(#1)}
\def\blfootnote{\gdef\@thefnmark{}\@footnotetext}
\makeatother
\textwidth=16cm
\def\house#1{\setbox1=\hbox{$\,#1\,$}
\dimen1=\ht1 \advance\dimen1 by 2pt \dimen2=\dp1 \advance\dimen2 by 2pt
\setbox1=\hbox{\vrule height\dimen1 depth\dimen2\box1\vrule}%
\setbox1=\vbox{\hrule\box1}%
\advance\dimen1 by .4pt \ht1=\dimen1
\advance\dimen2 by .4pt \dp1=\dimen2 \box1\relax}

\begin{document}
\title{On the explicit Galois group of $\mathbb{Q}(\sqrt{a_{1}}, \sqrt{a_{2}}, \dots, \sqrt{a_{n}}, \zeta_{d})$ over $\mathbb{Q}$}
\author[C.G. Karthick Babu, Anirban Mukhopadhyay and Sehra Sahu]
{C. G. Karthick Babu$^{(1)}$, Anirban Mukhopadhyay$^{(2)}$ and Sehra Sahu$^{(3)}$}
\date{}
\address{$^{(1)}$ Department of Mathematical Sciences, IISER Berhampur, Govt. ITI,
Engineering School Road, Berhampur-760010, India}
\address{$^{(2)}$Institute of Mathematical Sciences, HBNI,
 C.I.T Campus, Taramani, Chennai-600113, India}
\address{$^{(3)}$ Department of Mathematical and Computational Sciences, National Institute of Technology Karnataka, Surathkal, Srinivasanagar-575025, India}
\email{$^{(1)}$cgkbabu@iiserbpr.ac.in, cgkarthick24@gmail.com}
\email{$^{(2)}$anirban@imsc.res.in}
\email{$^{(3)}$sehrasahu@gmail.com}
\subjclass[2010]{11N13, 11L20, 11R11, 11R18}
\keywords{Primes in congruence classes, Quadratic residue, Quadratic extensions, Cyclotomic extensions}
 
\begin{abstract}
Let $S= \{ a_{1}, a_{2}, \dots, a_{n} \}$ be a finite set of non-zero integers. In \cite{KBAM21}, Karthick Babu and Anirban Mukhopadhyay calculated the explicit structure of the Galois group of multi-quadratic field $\mathbb{Q}(\sqrt{a_{1}}, \sqrt{a_{2}}, \dots, \sqrt{a_{n}})$ over $\mathbb{Q}$. For a positive integer $d \geq 3$, $\zeta_{d}$ denotes the primitive $d$-th root of unity. In this paper, we calculate the explicit structure of the Galois group of \newline $\mathbb{Q}(\sqrt{a_{1}}, \sqrt{a_{2}}, \dots, \sqrt{a_{n}}, \zeta_{d})$ over $\mathbb{Q}$ in terms of its action on  $\zeta_{d}$ and $\sqrt{a_{i}}$ for $1 \leq i \leq n$.
\end{abstract}

\maketitle
\section{Introduction} 
Let $S= \{ a_{1}, a_{2}, \dots, a_{n} \}$ be a finite set of non-zero integers. In 1968, M. Fried \cite{MFRI68} showed that there are infinitely many primes $p$ for which all the elements of $S$ are quadratic residues. He also provided a necessary and sufficient condition for $a_{i}$'s to be  quadratic non-residues modulo $p$. In 2010, R. Balasubramanian, F. Luca and R. Thangadurai (\cite{BLT10}, Theorem 2.3) calculated the exact density of such primes in Fried's result.

\vspace{1mm} 
A function $\theta : S \rightarrow \{-1, 1\}$ is called \textbf{choice of signs} for $S$. 
\begin{definition}
For a prime $p$, we will say that $S$ has \textbf{residue pattern $\theta$ modulo $p$} if 
\begin{equation*}
\left(\frac{s}{p}\right) = \theta(s), \quad \forall s \in S,
\end{equation*}
where $\left(\frac{\cdot}{p}\right)$ is the Legendre symbol mod $p$.
\end{definition}

Recently in \cite{KBAM21}, the first two authors calculated the exact density of the set of primes for which $S$ has residue pattern $\theta$ modulo $p$. The first two authors also obtained a necessary and sufficient condition for a choice of signs $\theta$ for $S$ to be a residue pattern modulo $p$ for positive density of primes. 
\noindent
 
\vspace{1mm}
\noindent
For any non-zero integer $n=\prod_{i=1}^{m}p_{i}^{k_{i}}$, we define the squarefree part of $n$ by $$\sqf(n)=\prod_{i=1}^{m}p_{i}^{r_{i}}, \ \text{where} \ r_{i} \equiv k_{i} \Mod 2.$$ For any finite subset $T \subset \mathbb{Z}\setminus \{0\}$, by $\sqf(T)$ we mean $\sqf(\prod_{s \in T}s)$. For any nonempty subset $T \subseteq S$, we set
\begin{equation}\label{prod theta}
\theta(T):= \prod_{s \in T} \theta(s).
\end{equation}
Further, we define
\begin{equation*}
S(N, \theta)=  \big\lbrace N < p \leq 2N : p \in \mathrm{P} \ \text{and} \ S \ \text{has residue pattern} \ \theta \Mod p \big\rbrace, 
\end{equation*}
where $\mathrm{P}$ is the set of all primes. 
The first result of this paper is the following:
\begin{thm}\label{residue thm}
Let  $S$ be a finite set of non-zero integers with a choice of signs $\theta$ for $S$. Then for any sufficiently large integer $N \geq 3$, for any integers $1 \leq f \leq d \leq (\log N)^{A}$ such that $(f,d)=1, A>0$, we have
\begin{equation*}
\sum_{\substack{p \in S(N, \theta) \\ p \equiv f (d) }} \log p = \frac{C(S, \theta, d)}{2^{|S|}} \cdot \frac{N}{\varphi(d)}+O\left(\frac{N \log N} {\exp(c(\log N)^{1/2})}\right),
\end{equation*}  
where $c$ is a positive constant which depends only on $A$ and $S$. Here,
\begin{equation}\label{mainterm cons}
C(S, \theta, d)= 
\begin{cases} 
\sum\limits_{\substack{T \subseteq S \\ \sqf(T) \mid d \\ \sqf(T) \equiv 1 (4)}}\left(\frac{\sqf(T)}{f}\right) \theta(T), \ \text{if} \ \ \ 4 \nmid d, \\
\sum\limits_{\substack{T \subseteq S \\ \sqf(T) \mid d \\ \sqf(T) \equiv 1, 3 (4)}}\left(\frac{\sqf(T)}{f}\right) \theta(T), \ \text{if} \ \ \ 4 \mid d, \ 8 \nmid d,\\
\quad \quad \sum\limits_{\substack{T \subseteq S \\ \sqf(T) \mid d}}\left(\frac{\sqf(T)}{f}\right)\theta(T), \ \text{if} \ \ \ 8 \mid d.
\end{cases}
\end{equation}  
\end{thm}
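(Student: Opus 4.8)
The plan is to detect the residue pattern analytically and reduce the whole sum to character sums over primes that Siegel--Walfisz can control. Since the condition $\left(\frac{s}{p}\right)=\theta(s)$ for all $s\in S$ is equivalent to $\prod_{s\in S}\tfrac12\big(1+\theta(s)\left(\frac{s}{p}\right)\big)=1$, with the product vanishing for every other $p>\max_i|a_i|$, I would first write
\begin{equation*}
\sum_{\substack{p\in S(N,\theta)\\ p\equiv f\Mod d}}\log p=\frac{1}{2^{|S|}}\sum_{\substack{N<p\le 2N\\ p\equiv f\Mod d}}\log p\prod_{s\in S}\big(1+\theta(s)\left(\tfrac{s}{p}\right)\big).
\end{equation*}
Expanding the product over subsets $T\subseteq S$ and using the multiplicativity of the Legendre symbol together with $\prod_{s\in T}\left(\frac{s}{p}\right)=\left(\frac{\sqf(T)}{p}\right)$, this becomes
\begin{equation*}
\frac{1}{2^{|S|}}\sum_{T\subseteq S}\theta(T)\,\Sigma_{\sqf(T)},\qquad \Sigma_{m}:=\sum_{\substack{N<p\le 2N\\ p\equiv f\Mod d}}\left(\frac{m}{p}\right)\log p .
\end{equation*}
Thus the entire problem reduces to evaluating $\Sigma_m$ for the finitely many integers $m=\sqf(T)$, each bounded in terms of $S$.

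To evaluate $\Sigma_m$, I would detect the progression by orthogonality of Dirichlet characters modulo $d$, writing
\begin{equation*}
\Sigma_m=\frac{1}{\varphi(d)}\sum_{\chi\Mod d}\overline{\chi}(f)\sum_{N<p\le 2N}(\chi\chi_m)(p)\log p,
\end{equation*}
where $\chi_m(p):=\left(\frac{m}{p}\right)$ is, by quadratic reciprocity, a real primitive Dirichlet character whose conductor is $c_m=|D_m|$, the absolute discriminant of $\mathbb{Q}(\sqrt m)$. The product $\chi\chi_m$ is a character to modulus $\mathrm{lcm}(d,c_m)\ll(\log N)^{A}$ (since $c_m\ll_S 1$), so Siegel--Walfisz applies uniformly: the inner sum is $\ll N\exp(-c(\log N)^{1/2})$ unless $\chi\chi_m$ is principal, in which case it is $N+O(\cdots)$. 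Now $\chi\chi_m$ is principal for some $\chi\Mod d$ if and only if $c_m\mid d$, and then the surviving character contributes the value $\chi_m(f)=\left(\frac{\sqf(T)}{f}\right)$, the equality of this constant with the stated symbol being exactly quadratic reciprocity. Hence
\begin{equation*}
\Sigma_m=\mathbf{1}\big(c_m\mid d\big)\,\frac{N}{\varphi(d)}\left(\frac{\sqf(T)}{f}\right)+O\!\left(\frac{N\log N}{\exp(c(\log N)^{1/2})}\right).
\end{equation*}

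It then remains to identify, for each value of $v_2(d)$, exactly which subsets survive, i.e. for which $m=\sqf(T)$ one has $c_m\mid d$. Here I would use that $c_m=|m|$ if $m\equiv1\Mod4$ and $c_m=4|m|$ otherwise, so that $c_m$ is odd exactly when $m\equiv1\Mod4$ and $8\mid c_m$ exactly when $m$ is even. Combining this with coprimality, the condition $c_m\mid d$ becomes: if $4\nmid d$ it forces $\sqf(T)\equiv1\Mod4$ with $\sqf(T)\mid d$; if $4\mid d,\ 8\nmid d$ it allows every odd $m$, i.e. $\sqf(T)\equiv1,3\Mod4$ with $\sqf(T)\mid d$; and if $8\mid d$ it allows every $m$ with $\sqf(T)\mid d$. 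Substituting these three descriptions of the surviving subsets into $\tfrac{1}{2^{|S|}}\sum_T\theta(T)\Sigma_{\sqf(T)}$ produces precisely the three cases of $C(S,\theta,d)$, while the $O(2^{|S|}\varphi(d))$ individual error terms combine into the stated error. The main obstacle will be the conductor bookkeeping in this last step: tracking the $2$-adic part of $c_m$ and the sign of $\sqf(T)$ (for negative $a_i$) through quadratic reciprocity, and checking in each case that the constant value of $\left(\frac{m}{p}\right)$ along the progression is indeed $\left(\frac{\sqf(T)}{f}\right)$.
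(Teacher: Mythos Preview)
Your proposal is correct and follows essentially the same route as the paper: expand the indicator $\prod_{s\in S}\tfrac12(1+\theta(s)(\tfrac{s}{p}))$ over subsets $T\subseteq S$, reduce to the character sums $\Sigma_{\sqf(T)}$, and apply Siegel--Walfisz to isolate the main terms. The only difference is cosmetic---the paper splits the analysis into two lemmas (one for $\sqf(T)\nmid d$, one for $\sqf(T)\mid d$) and does the case work via explicit quadratic reciprocity on $\sqf(T)\bmod 4$ and $v_2(d)$, whereas you package the same trichotomy as the single conductor condition $c_m\mid d$ for the Kronecker character $\chi_m$.
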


In Section \ref{pf of thm 1}, we will discuss the proof of Theorem \ref{residue thm}. 
Then in Section \ref{cor of thm 1}, we prove some combinatorial lemmas, and as an application of those lemmas, we discuss the positivity of $C(S, \theta, d)$ in Lemma \ref{mainterm lem}. Also, we discuss some Corollaries of Theorem \ref{residue thm} in Section \ref{cor of thm 1}. Especially in the Corollary \ref{inf primes cor}, we discuss a necessary and sufficient condition for a choice of signs $\theta$ for $S$ to be a residue pattern modulo $p$ for infinitely many primes $p$ of the form $p \equiv f \Mod d$.

\vspace{1mm}
In Section \ref{counting cos}, we count the number of choice of signs $\theta$ for $S$ such that $S$ has residue pattern $\theta$ modulo $p$ for infinitely many primes $p$ of the form $p \equiv f \Mod d$.
\vspace{2mm}

It is well-known that the degree of the multi-quadratic field $\mathbb{Q}(\sqrt{a_{1}}, \sqrt{a_{2}}, \dots, \sqrt{a_{n}})$
over $\mathbb{Q}$ is $2^{t}$ for some integer $0 \leq t \leq n$ depending on the algebraic cancellations among the $\sqrt{a_{i}}$'s. The arithmetic of multi-quadratic number fields plays a crucial role in the theory of elliptic curves (see \cite{ACZ95}, \cite{MLML85}). In \cite{BLT10}, R. Balasubramanian, F. Luca and R. Thangadurai gave an exact formula for the degree of the multi-quadratic field $\mathbb{Q}(\sqrt{a_{1}}, \sqrt{a_{2}}, \dots, \sqrt{a_{n}})$ over $\mathbb{Q}$. Recently in \cite{KBAM21}, the first two authors calculated the explicit structure of the Galois group of multi-quadratic field $\mathbb{Q}(\sqrt{a_{1}}, \sqrt{a_{2}}, \dots, \sqrt{a_{n}})$ over $\mathbb{Q}$. For a positive integer $d \geq 3$, $\zeta_{d}$ denotes the primitive $d$-th root of unity. In Section \ref{isomorphism sec}, we calculate the explicit structure of the Galois group of $\mathbb{Q}(\sqrt{a_{1}}, \sqrt{a_{2}}, \dots, \sqrt{a_{n}}, \zeta_{d})$ over $\mathbb{Q}$ in terms of its action on  $\zeta_{d}$ and $\sqrt{a_{i}}$ for $1 \leq i \leq n.$ Further, in Corollary \ref{multi cyclo extn}, we discuss the explicit structure of the Galois group of $\mathbb{Q}(\sqrt{a_{1}}, \dots, \sqrt{a_{n}}, \zeta_{d_{1}}, \dots, \zeta_{d_{k}})$ over $\mathbb{Q}$, where $\zeta_{d_{i}}$ denotes the primitive $d_{i}$-th roots of unity, for $1 \leq i \leq k$. In the last section, we distinguish the algebraic cancellations coming from the multi quadratic part and the cyclotomic part.

\section{Notation}
We recall that the notation $f = O(g)$ is equivalent to the assertion that the inequality $|f| \leq c g $ holds for some constant $c > 0$. As usual, $p$ denotes a prime and $|A|$ denotes the cardinality of the set $A.$ $\varphi(n)$ denotes the Euler phi function. For any set $B,$ we write $(p, B)=1,$ to mean $(p, b)=1$ for each $b \in B.$ $A \triangle B$ denotes the symmetric difference of sets $A$ and $B$. $A \setminus B$ denotes set difference of sets $A$ and $B$.
 
\section{Proof of Theorem \ref{residue thm}}\label{pf of thm 1}
\subsection{Arithmetic Lemmas}
Here we list several lemmas required for the proof of Theorem \ref{residue thm}.
\begin{lemma}\label{char lemma}
Let $\chi_{1}$ and $\chi_{2}$ be Dirichlet characters modulo $d_{1}$ and $d_{2}$ respectively such that $(d_{1}, d_{2})=1$. We define $\chi= \chi_{1} \chi_{2}$ as a character modulo $d$, where $d=d_{1}d_{2}.$ Then $\chi$ is the principal character modulo $d$ if and only if $\chi_{1}, \chi_{2}$ are the principal characters modulo $d_{1}, d_{2}$ respectively.
\end{lemma}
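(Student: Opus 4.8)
The plan is to reduce everything to the Chinese Remainder Theorem, which provides the group isomorphism $(\mathbb{Z}/d\mathbb{Z})^{\times}\cong(\mathbb{Z}/d_{1}\mathbb{Z})^{\times}\times(\mathbb{Z}/d_{2}\mathbb{Z})^{\times}$ arising from the coprimality $(d_{1},d_{2})=1$. Throughout I write $\chi_{0,d}$ for the principal character modulo $d$, recalling that $\chi_{0,d}(n)=1$ when $(n,d)=1$ and $\chi_{0,d}(n)=0$ otherwise, and that by definition $\chi(n)=\chi_{1}(n)\chi_{2}(n)$, where each factor is read modulo its own modulus. Since this is an ``if and only if'' statement, I would handle the two implications separately.

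First I would dispose of the easy (``if'') direction. Assuming $\chi_{1}=\chi_{0,d_{1}}$ and $\chi_{2}=\chi_{0,d_{2}}$, I note that for $n$ with $(n,d)=1$ one has $(n,d_{1})=(n,d_{2})=1$, so both factors equal $1$ and $\chi(n)=1$; whereas for $n$ with $(n,d)>1$ at least one of $(n,d_{1}),(n,d_{2})$ exceeds $1$, making the corresponding factor vanish, so $\chi(n)=0$. Hence $\chi=\chi_{0,d}$, with no appeal to anything beyond the definitions.

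For the ``only if'' direction, I would suppose $\chi=\chi_{0,d}$ and fix an arbitrary residue $n_{1}$ coprime to $d_{1}$, the goal being $\chi_{1}(n_{1})=1$. Here I invoke the Chinese Remainder Theorem: because $(d_{1},d_{2})=1$, I can choose an integer $n$ with $n\equiv n_{1}\pmod{d_{1}}$ and $n\equiv 1\pmod{d_{2}}$. Then $(n,d_{1})=(n_{1},d_{1})=1$ and $(n,d_{2})=1$, so $(n,d)=1$ and therefore $\chi(n)=1$ by hypothesis. On the other hand $\chi(n)=\chi_{1}(n)\chi_{2}(n)=\chi_{1}(n_{1})\,\chi_{2}(1)=\chi_{1}(n_{1})$, which forces $\chi_{1}(n_{1})=1$. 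As $n_{1}$ ranges over all residues coprime to $d_{1}$, this yields $\chi_{1}=\chi_{0,d_{1}}$, and the symmetric argument with the roles of $d_{1}$ and $d_{2}$ interchanged gives $\chi_{2}=\chi_{0,d_{2}}$.

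The argument is entirely elementary, so I do not anticipate a genuine obstacle; the only point needing care is the simultaneous-congruence step, where the hypothesis $(d_{1},d_{2})=1$ is exactly what guarantees an $n$ that realizes the prescribed residue modulo $d_{1}$ while staying trivial modulo $d_{2}$, thereby isolating the single value $\chi_{1}(n_{1})$ from the product $\chi_{1}\chi_{2}$.
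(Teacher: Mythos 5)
Your proof is correct and follows essentially the same route as the paper: the paper's step of choosing $m$ with $r_{2}+md_{2}\equiv 1\pmod{d_{1}}$ is exactly your Chinese Remainder Theorem construction of an $n$ that is a prescribed residue modulo one modulus and $\equiv 1$ modulo the other, which isolates a single character value from the product. No issues.
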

\begin{proof}
If $\chi_{1}$ and $\chi_{2}$ are the principal characters modulo  $d_{1}$ and $d_{2}$ respectively. Then it is easy to see that $\chi$ is the principal character modulo $d$.  

Conversely, suppose $\chi= \chi_{1}. \chi_{2}$ is the principal character modulo $d.$ Then by definition, we have $\chi_{2}(n)=\overline{\chi_{1}}(n)$ if $(n, d_{1}d_{2})=1.$ Suppose $1 \neq r_{2}$ be a reduced residue class modulo $d_{2}$ such that $(r_{2},d_{2})=1$. Since $(d_{1}, d_{2})=1$ there is an $m \in \mathbb{Z}^{+}$ such that  $r_{2}+md_{2} \equiv 1 \mod d_{1}.$ Therefore, we have $1=\overline{\chi_{1}}(1)= \chi_{2}(r_{2}+md_{2})=\chi_{2}(r_{2})$ for any $r_{2} < d_{2}$ and $(r_{2},d_{2})=1.$ Thus $\chi_{2}$ is the principal character modulo $d_{2}.$ Similarly, we can show that $\chi_{1}$ is the principal character modulo $d_{1}.$  
\end{proof}
 
\begin{remark}
Above lemma need not be true if $(d_{1}, d_{2})>2.$ For example, consider $\chi_{6}$ and $\chi_{9}$, the non-principal Dirichlet characters of modulus $6$ and modulus $9$ respectively, given by the character tables below
\begin{center}
\begin{tabular}{ |c|c|c|c|c|c|c| } 
 \hline
 n & 0 & 1 & 2 & 3 & 4 & 5 \\ \hline
 $\chi_{6}(n)$ & 0& 1 & 0 & 0 & 0 & -1\\  
 \hline
\end{tabular}
\end{center}

\begin{center}
\begin{tabular}{ |c|c|c|c|c|c|c|c|c|c| } 
 \hline
 n & 0 & 1 & 2 & 3 & 4 & 5& 6 & 7 & 8 \\ \hline
 $\chi_{9}(n)$ & 0 & 1 & -1 & 0 & 1 & -1 & 0 & 1 & -1\\  
 \hline
\end{tabular}
\end{center}
But the character $\chi= \chi_{6}\chi_{9}$ is principal modulo 18. Here we note that both the characters $\chi_{6}$ and $\chi_{9}$ are induced by a non-principal character modulo $(d_{1},d_{2})=3$.

Furthermore, suppose $(d_{1},d_{2})> 1,$ and $\chi_{1}\chi_{2}$ is the principal character modulo $[d_{1},d_{2}]$, then both $\chi_{1} \ \text{and} \ \chi_{2}$ are induced by a character modulo $d'$ which divides $(d_{1}, d_{2}).$ To prove that, it is enough to show 
$$\chi_{2}(r)=1, \ \ \ \text{for any} \ \ r \equiv 1 \Mod{(d_{1}, d_{2})}, \ \text{and}  \ \ (r,d_{2})=1.$$ Since $r \equiv 1 \mod (d_{1}, d_{2})$ we have $r=1+ l (d_{1},d_{2})$ for some $l \in \mathbb{Z}^{+}$. Also, it follows from the linear form of gcd that there is an $m \in \mathbb{Z}^{+}$ such that $1+l(d_{1}, d_{2})+m d_{2} \equiv 1 \mod d_{1},$ which implies
\begin{equation*}
\chi_{2}(r)=\chi_{2}(1+l(d_{1}, d_{2}))= \chi_{2}(1+l(d_{1}, d_{2})+md_{2})=\overline{\chi_{1}}(1)=1.  
\end{equation*}
Similarly, $\chi_{1}$ is also induced by a character modulo $d'$ which divides  $(d_{1},d_{2}).$ But Lemma \ref{char lemma} holds if we assume $(d_{1},d_{2})=2$, since there is no non-principal character modulo 2.
\end{remark}
\vspace{2mm}
\begin{corollary}\label{char cor}
Let $\chi_{1}$ and $\chi_{2}$ be two characters modulo $d_{1}$ and $d_{2}$ respectively such that $d_{1} < d_{2}$ and $d_{1} \nmid d_{2}$. If either $\chi_{1}$ or $\chi_{2}$ is primitive, then $\chi_{1}\chi_{2}$ cannot be the principal character modulo $[d_{1},d_{2}].$
\end{corollary}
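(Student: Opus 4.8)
The plan is to argue by contradiction and to lean on the structural conclusion recorded in the Remark immediately above. Suppose, contrary to the assertion, that $\chi_{1}\chi_{2}$ is the principal character modulo $[d_{1}, d_{2}]$. First I would split off the coprime case $(d_{1}, d_{2}) = 1$, where $[d_{1}, d_{2}] = d_{1} d_{2}$ and Lemma \ref{char lemma} applies directly: it forces both $\chi_{1}$ and $\chi_{2}$ to be principal. Since $d_{1} \nmid d_{2}$ excludes $d_{1} = 1$, both moduli exceed $1$, and the principal character modulo an integer $> 1$ has conductor $1$, hence is imprimitive; this contradicts the primitivity of whichever of $\chi_{1}, \chi_{2}$ is assumed primitive.

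For the main case $(d_{1}, d_{2}) > 1$, I would invoke verbatim the conclusion of the Remark: if $\chi_{1}\chi_{2}$ is principal modulo $[d_{1}, d_{2}]$, then each of $\chi_{1}$ and $\chi_{2}$ is induced by a character modulo some $d'$ with $d' \mid (d_{1}, d_{2})$. The pivotal step is to read this off at the level of conductors: being induced by a character modulo $d'$ forces the conductor of $\chi_{i}$ to divide $d'$, and therefore to divide $(d_{1}, d_{2})$.

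It then remains to play primitivity against the divisibility hypotheses. If $\chi_{1}$ is primitive, its conductor equals $d_{1}$, so $d_{1} \mid (d_{1}, d_{2})$; as $(d_{1}, d_{2}) \mid d_{2}$, this gives $d_{1} \mid d_{2}$, contradicting $d_{1} \nmid d_{2}$. If instead $\chi_{2}$ is primitive, its conductor equals $d_{2}$, so $d_{2} \mid (d_{1}, d_{2})$; but $(d_{1}, d_{2}) \leq d_{1} < d_{2}$, so $d_{2}$ cannot divide $(d_{1}, d_{2})$, a contradiction. Either alternative closes the argument.

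I expect no serious obstacle here: once the Remark is in hand, the content is entirely the incompatibility between primitivity (conductor equal to the full modulus) and the conductor bound coming from $(d_{1}, d_{2})$. The only points demanding care are organizational --- treating the coprime case through Lemma \ref{char lemma} rather than the Remark (whose hypothesis is $(d_{1}, d_{2}) > 1$), and noting that the borderline value $(d_{1}, d_{2}) = 2$ is already subsumed, since there the inducing character modulo $d' \mid 2$ is necessarily principal and the conductor-divisibility step is unaffected.
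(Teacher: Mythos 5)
Your argument is correct and is essentially the intended derivation: the paper states this corollary without proof precisely because it follows from Lemma \ref{char lemma} (in the coprime case) together with the ``Furthermore'' part of the preceding Remark (when $(d_{1},d_{2})>1$), exactly as you have organized it. The conductor comparison against $(d_{1},d_{2})$ and the observation that $d_{1}\nmid d_{2}$ rules out $d_{1}=1$ are both handled correctly, so there is nothing to add.
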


\begin{remark}[Ch.5, \cite{DAVBOOK}] 
For every fundamental discriminant $m,$ the Kronecker symbol $\chi_{m} = \left(\frac{m}{\cdot}\right)$ is a primitive quadratic character of conductor $|m|.$ Conversely, given any primitive quadratic character $\chi,$ there exists a unique fundamental discriminant $m$ such that $\chi= \chi_{m}.$ When $m$ is not a fundamental discriminant $\chi_{m}$ might be primitive character (e.g. $\chi_{2}$), nonprimitive character (e.g. $\chi_{4}$) or even not a character at all (e.g. $\chi_{3}$). Therefore, in the following lemma we use the law of reciprocity to transform Kronecker symbol $\left(\frac{m}{\cdot}\right)$  into Jacobi symbol $\left(\frac{\cdot}{m}\right)$, which is Dirichlet character modulo $m$.  
\end{remark}

\begin{lemma}\label{charsum lemma}
Let $N \geq 3$ be a positive integer and $A >0,\ B> 0$ be real numbers. Then there exists a positive real $c=c(A,B)$ such that for positive integers $f, d$ satisfying $1 \leq f \leq d \leq \log ^{A}N$ and $(f, d)=1$,
\begin{equation*}
\sum_{\substack{ N < p \leq 2N \\ p \equiv f (d)}}\left(\frac{s}{p}\right) \ll N \exp(-c(\log N)^{1/2}) 
\end{equation*} 
holds for any squarefree integer $s \leq \log^{B}N$ such that  $s\nmid d$.
\end{lemma}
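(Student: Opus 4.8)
The plan is to recognize $\left(\frac{s}{p}\right)$, as a function of the variable $p$, as a single primitive Dirichlet character, and then to detect the progression $p \equiv f \Mod d$ by orthogonality, thereby reducing the whole sum to character sums over primes to which the Siegel--Walfisz theorem applies. Since $s \nmid d$ forces $s \neq \pm 1$, the field $\mathbb{Q}(\sqrt{s})$ has a fundamental discriminant $D$ with $|D| \in \{|s|, 4|s|\}$, so that $s \mid |D|$; and, as recalled in the remark preceding the lemma, for every odd prime $p \nmid D$ the symbol $\left(\frac{s}{p}\right)$ agrees with the primitive real Kronecker character $\chi_{s} = \left(\frac{D}{\cdot}\right)$ of conductor $|D|$ (the factors $\left(\frac{-1}{p}\right)$ and $\left(\frac{2}{p}\right)$ from reciprocity being absorbed into $D$, which also disposes of the sign of $s$ and the case $2 \mid s$). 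For $N$ large we have $2|s| \leq 2\log^{B}N < N < p$ for every $p \in (N,2N]$, so no prime in the range divides $2s$ and thus $\left(\frac{s}{p}\right) = \chi_{s}(p)$ throughout.

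Next, using orthogonality of the Dirichlet characters modulo $d$ and $(f,d)=1$, I would write the indicator of $p \equiv f \Mod d$ as $\tfrac{1}{\varphi(d)}\sum_{\psi \bmod d}\overline{\psi}(f)\,\psi(p)$, obtaining
\[
\sum_{\substack{N < p \leq 2N \\ p \equiv f (d)}}\left(\frac{s}{p}\right)
= \frac{1}{\varphi(d)}\sum_{\psi \bmod d}\overline{\psi}(f)\sum_{N < p \leq 2N}(\chi_{s}\psi)(p).
\]
Each $\chi_{s}\psi$ is a Dirichlet character modulo $q := [\,|D|, d\,] \leq 4|s|\,d \leq 4\log^{A+B}N$, i.e.\ a fixed power of $\log N$, which is precisely the range in which Siegel--Walfisz is effective with a constant depending only on $A$ and $B$.

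The crux — and the step I expect to be the main obstacle — is to show that $\chi_{s}\psi$ is non-principal for \emph{every} $\psi$ modulo $d$, since otherwise the corresponding inner sum would contribute a genuine main term. This is exactly where $s \nmid d$ enters. If $\chi_{s}\psi$ were principal modulo $q$, then $\chi_{s}(n) = \overline{\psi}(n)$ for all $n$ coprime to $q$, so $\chi_{s}$ and $\overline{\psi}$ would be induced by the same primitive character and hence have equal conductors; as $\chi_{s}$ is primitive of conductor $|D|$ and $\overline{\psi}$ has conductor dividing $d$, this forces $|D| \mid d$, whence $s \mid |D| \mid d$, contradicting $s \nmid d$. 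This is the mechanism behind Corollary \ref{char cor}, applied to the primitive character $\chi_{s}$, and I would invoke that corollary once the conductors have been compared. The delicate points to check carefully are the precise value of $\mathrm{cond}(\chi_{s})$ when $s$ is even or negative, and that primitivity of $\chi_{s}$ is preserved under the comparison; both are settled by the fundamental-discriminant description $|D| \in \{|s|, 4|s|\}$ above.

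Finally, with each $\chi_{s}\psi$ non-principal of modulus $q \leq 4\log^{A+B}N$, the Siegel--Walfisz theorem gives
\[
\sum_{N < p \leq 2N}(\chi_{s}\psi)(p)\log p \ll N\exp\!\big(-c_{0}(\log N)^{1/2}\big)
\]
uniformly in such $q$ (the higher prime-power terms contributing only $O(\sqrt{N})$), and partial summation removes the $\log p$ weight at no essential loss. Summing over the at most $\varphi(d) \leq \log^{A}N$ characters $\psi$ and dividing by $\varphi(d)$ then yields the stated bound $\ll N\exp(-c(\log N)^{1/2})$, where $c = c(A,B)$ is chosen slightly smaller than $c_{0}$ to absorb the polylogarithmic loss from the number of characters and the partial summation. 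This completes the plan.
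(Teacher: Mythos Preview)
Your proposal is correct and follows essentially the same strategy as the paper: identify $\left(\frac{s}{p}\right)$ as a primitive real character in $p$ (the paper does this by case analysis on $s \bmod 4$ via quadratic reciprocity and the Jacobi symbol $\left(\frac{\cdot}{s}\right)$, you more uniformly via the fundamental discriminant $D$ of $\mathbb{Q}(\sqrt{s})$), expand the progression condition by orthogonality over characters modulo $d$, verify non-principality of each product character from $s\nmid d$ (the mechanism of Corollary~\ref{char cor}), and apply Siegel--Walfisz. Your packaging via $D$ cleanly absorbs the sign of $s$ and the factor $2$, avoiding the paper's case split, but the argument is otherwise the same.
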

\begin{proof}
We first assume $s$ is odd. Then we have the following two cases. 

\noindent
\textbf{Case I :} We assume $s \equiv 1 \Mod 4$. Then by the law of quadratic reciprocity, we have
\begin{align*}
\sum_{\substack{ N < p \leq 2N \\ p \equiv f (d)}}\left(\frac{s}{p}\right) = \sum_{\substack{ N < p \leq 2N \\ p \equiv f (d)}}\left(\frac{p}{s}\right) =\frac{1}{\varphi(d)} \sum_{\psi \Mod d } \overline{\psi}(f)\sum_{N<p\leq 2N}\left(\frac{p}{s}\right)\psi(p),
\end{align*}
where the first sum runs over the set of all Dirichlet characters modulo $d.$ Since $s$ is a squarefree odd integer, the Jacobi symbol $\left(\frac{.}{s}\right)$ is a primitive character modulo $s$. By Corollary \ref{char cor}, $\left(\frac{.}{s}\right)\psi$ is a non-principal character modulo $[d,s]$ as $s \nmid d$. Then by Siegel's Theorem (Ch. 22, \cite{DAVBOOK}) we obtain the required estimate.

\vspace{1mm}

Other cases use Reciprocity and Siegel's Theorem similarly.
\end{proof}

The next lemma treats the same character sum when $s$ divides $d$, and as expected, we get the main term.  
\begin{lemma}\label{charsum at d lemma}
Let $N \geq 3$ be an integer and $A>0$ be a real number. Then for a squarefree integer $s$ such that $s \mid d$, there exists a positive real $c=c(A)$ such that for positive integers $f, d$ satisfying $1 \leq f \leq d \leq \log ^{A}N$ and $(f, d)=1$,   we have
\begin{equation*}
\sum_{\substack{ N < p \leq 2N \\ p \equiv f (d)}}\left(\frac{s}{p}\right)=
\begin{cases} 
\left(\frac{s}{f}\right) \sum_{\substack{ N < p \leq 2N \\ p \equiv f (d)}} 1 \ \ \ \text{if} \ \ \ s \equiv 1 \Mod 4, \\
\left(\frac{s}{f}\right) \sum_{\substack{ N < p \leq 2N \\ p \equiv f (d)}} 1 \ \ \ \text{if} \ \ \ s \equiv 3 \Mod 4  \ \text{and} \ 4 \mid d,\\
\left(\frac{s}{f}\right) \sum_{\substack{ N < p \leq 2N \\ p \equiv f (d)}} 1 \ \ \ \text{if} \ \ \ s \equiv 2 \Mod 4  \ \text{and} \ 8 \mid d,\\
O(N \exp(-c(\log N)^{1/2})), \quad \text{otherwise}. 
\end{cases}
\end{equation*} 
\end{lemma}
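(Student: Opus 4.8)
The plan is to reduce $\left(\frac{s}{p}\right)$, via quadratic reciprocity and the two supplementary laws, to a Jacobi symbol in which $p$ appears only below, together with a bounded number of correction factors whose behaviour is governed by $p$ modulo $4$ and $8$. For $p > N$ we automatically have $p$ odd and $p \nmid 2s$, since $s \mid d \leq (\log N)^{A}$, so for every prime counted one may write
\begin{equation*}
\left(\frac{s}{p}\right) = \left(\frac{-1}{p}\right)^{\varepsilon} \left(\frac{2}{p}\right)^{\delta} \left(\frac{p}{s_{0}}\right),
\end{equation*}
where $s_{0}$ is the odd part of $|s|$, the exponent $\delta \in \{0,1\}$ records whether $s$ is even, and $\varepsilon \in \{0,1\}$ collects the contribution of the sign of $s$ together with $\frac{s_{0}-1}{2} \bmod 2$ coming from reciprocity. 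A direct check shows that $\varepsilon$ and $\delta$ depend only on the class of $s$ modulo $4$ (the sign of $s$ is absorbed: e.g. for $s \equiv 1 \Mod 4$ the sign correction and the reciprocity sign cancel, leaving $\varepsilon = 0$). Since $s \mid d$ forces $s_{0} \mid d$, we have $p \equiv f \Mod{s_{0}}$, so the Jacobi symbol $\left(\frac{p}{s_{0}}\right) = \left(\frac{f}{s_{0}}\right)$ is \emph{constant} on the progression. As $\left(\frac{-1}{p}\right)$ depends only on $p \bmod 4$ and $\left(\frac{2}{p}\right)$ only on $p \bmod 8$, each correction factor is constant on $p \equiv f \Mod d$ precisely when $4 \mid d$, respectively $8 \mid d$; this single dichotomy produces the four-way split in the statement.

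In the three main-term cases I would observe that every correction factor present is constant: for $s \equiv 1 \Mod 4$ there is no factor at all, for $s \equiv 3 \Mod 4$ with $4 \mid d$ only $\left(\frac{-1}{p}\right)$ occurs and $4 \mid d$ freezes it, and for $s \equiv 2 \Mod 4$ with $8 \mid d$ the factor $\left(\frac{2}{p}\right)$ occurs and $8 \mid d$ freezes it (and then $4 \mid d$ freezes any $\left(\frac{-1}{p}\right)$ coming from $s_{0}$). Hence $\left(\frac{s}{p}\right)$ takes one fixed value on all primes counted, and summation gives that value times $\sum_{p \equiv f (d)} 1$. It then remains to identify the fixed value with the Kronecker symbol $\left(\frac{s}{f}\right)$. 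The hard part will be exactly this reciprocity bookkeeping: when $4 \mid d$ or $8 \mid d$ the condition $(f,d)=1$ forces $f$ odd and the identity is ordinary Jacobi reciprocity, but when $s \equiv 1 \Mod 4$ and $f$ is even one must invoke the definition of the Kronecker symbol at $2$ and the equality $\left(\frac{2}{s_{0}}\right) = \left(\frac{s_{0}}{2}\right)$. This is routine but needs care across the parity of $f$ and the sign of $s$.

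For the remaining configurations at least one correction factor is non-constant: $\left(\frac{-1}{p}\right)$ when $s \equiv 3 \Mod 4$ and $4 \nmid d$, or $\left(\frac{2}{p}\right)$ when $s$ is even and $8 \nmid d$. I would pull out the constant $\left(\frac{f}{s_{0}}\right)$ and any frozen correction, so that the sum becomes $\left(\frac{f}{s_{0}}\right)(\pm 1) \sum_{N < p \le 2N,\, p \equiv f (d)} \chi(p)$ for a single non-principal character $\chi$ of conductor $4$ or $8$, namely one of $\left(\frac{-1}{\cdot}\right)$, $\left(\frac{2}{\cdot}\right)$ or $\left(\frac{-2}{\cdot}\right)$. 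Expanding the congruence $p \equiv f \Mod d$ by orthogonality of the Dirichlet characters modulo $d$ turns this into
\begin{equation*}
\frac{1}{\varphi(d)} \sum_{\psi \Mod d} \overline{\psi}(f) \sum_{N < p \le 2N} \chi(p)\psi(p).
\end{equation*}
Under the case hypothesis the conductor of $\chi$ (which is $4$ or $8$) does not divide $d$, so $\chi\psi$ cannot be principal for any $\psi \Mod d$ — were it principal, $\chi$ would be induced by a character modulo $d$, forcing its conductor to divide $d$, exactly the reasoning of the Remark following Lemma \ref{char lemma} and of Corollary \ref{char cor}. Siegel's Theorem (Ch.\ 22, \cite{DAVBOOK}) then bounds each inner sum by $N \exp(-c(\log N)^{1/2})$, and summing over the at most $\varphi(d) \le (\log N)^{A}$ characters $\psi$ preserves this bound, yielding the error term. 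This is the same mechanism as in the proof of Lemma \ref{charsum lemma}, the only difference being that here the primitive factor $\left(\frac{\cdot}{s_{0}}\right)$ has already collapsed to a constant and the surviving non-principal character comes entirely from the $2$-part.
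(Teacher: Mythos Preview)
Your proposal is correct and follows essentially the same route as the paper: apply quadratic reciprocity so that the odd part contributes the constant Jacobi symbol $\left(\frac{f}{s_{0}}\right)$, observe that the supplementary factors $\left(\frac{-1}{p}\right)$ and $\left(\frac{2}{p}\right)$ are frozen exactly when $4\mid d$ and $8\mid d$, and in the remaining cases expand by characters modulo $d$ and invoke Siegel's theorem for the resulting non-principal character of conductor $4$ or $8$. The paper carries this out case by case (and in fact omits the even case entirely), whereas you package it uniformly via the exponents $\varepsilon,\delta$, but the argument is the same.
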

\begin{proof} 
We assume $s \equiv 1 \Mod 4$. Since $s \mid d$ and $p \equiv f \Mod d$ implies $p \equiv f \Mod s$, by the law of quadratic reciprocity, we have
\begin{align*}
\sum_{\substack{ N < p \leq 2N \\ p \equiv f (d)}}\left(\frac{s}{p}\right) = \sum_{\substack{ N < p \leq 2N \\ p \equiv f (d)}}\left(\frac{p}{s}\right) =\left(\frac{f}{s}\right) \sum_{\substack{ N < p \leq 2N \\ p \equiv f (d)}} 1=\left(\frac{s}{f}\right) \sum_{\substack{ N < p \leq 2N \\ p \equiv f (d)}} 1.
\end{align*}

\noindent
Now we assume $s \equiv 3 \Mod 4$. Suppose $4 \mid d$ then either all primes of the form $p \equiv f \Mod d$ are $1 \Mod 4$ or all are $3 \Mod 4.$ Since $s \mid d$ and $p \equiv f \Mod d$ implies $p \equiv f \Mod s$, by the law of quadratic reciprocity, we have
\begin{align*}
\sum_{\substack{ N < p \leq 2N \\ p \equiv f (d)}}\left(\frac{s}{p}\right) =\sum_{\substack{ N < p \leq 2N \\ p \equiv f (d)}}(-1)^{\frac{p-1}{2}}\left(\frac{p}{s}\right)= \left(\frac{f}{s}\right) \sum_{\substack{ N < p \leq 2N \\ p \equiv f (d)}} (-1)^{\frac{f-1}{2}}=\left(\frac{s}{f}\right) \sum_{\substack{ N < p \leq 2N \\ p \equiv f (d)}} 1.
\end{align*}
Suppose $4 \nmid d$, then by using similar arguments as in Lemma \ref{charsum lemma} we obtain the required estimate. For the case $s \equiv 2 \Mod 4$, the proof again uses Reciprocity and Siegel's Theorem as above. Hence we omit the proof. 
\end{proof}

\vspace{2mm}
\noindent
\textbf{Proof of Theorem \ref{residue thm}: } By using the definition of the Legendre symbol, we can write 
\begin{equation*}
\sum_{\substack{p \in S(N, \theta) \\ p \equiv f (d) }} \log p = \frac{1}{2^{|S|}}\sum_{\substack{N < p \leq 2N \\ p \equiv f (d) \\ (p, S)=1}} \log p \prod_{s \in S} \left(1+\theta(s)\left(\frac{s}{p}\right)\right).
\end{equation*}
The condition $(p, S)=1$ can be omitted for sufficiently large $N$. For instance we can take $(\log N)^{B} > \prod_{s \in S} s$ for some $B > 0$. We write
\begin{equation}\label{prod-symb eq}
\prod_{s \in S}\left(1+\theta(s)\left(\frac{s}{p}\right)\right)= 1+ \sum_{\phi\neq T \subseteq S} \prod_{s \in T}\theta(s)\left(\frac{s}{p}\right)
\end{equation}
Since the empty product is $1$ by convention, using \eqref{prod theta} we rewrite \eqref{prod-symb eq} as
\begin{align*}
\prod_{s \in S}\left(1+\theta(s)\left(\frac{s}{p}\right)\right)=\sum_{T \subseteq S}\theta(T)\prod_{s \in T}\left(\frac{s}{p}\right).
\end{align*}
Therefore,
\begin{align*}
\frac{1}{2^{|S|}}\sum_{\substack{N < p \leq 2N \\ p \equiv f (d)}} \log p \prod_{s \in S} \left(1+\theta(s)\left(\frac{s}{p}\right)\right)=\frac{1}{2^{|S|}} \sum_{ T \subseteq S}\theta(T)\sum_{\substack{ N < p \leq 2N \\ p \equiv f (d)}}\prod_{s \in T} \left(\frac{s}{p}\right)\log p.
\end{align*}

Recall that we assumed $d$ and $\prod_{s \in S} s$ are both bounded by powers of $\log N$. Suppose $4 \nmid d$, then by using Lemma \ref{charsum lemma} along with Lemma \ref{charsum at d lemma} and partial summation formula, we obtain

\begin{align*}
\sum_{\substack{p \in S(N, \theta) \\ p \equiv f (d)}} \log p = \frac{1}{2^{|S|}}\sum\limits_{\substack{T \subseteq S \\ \sqf(T) \mid d \\ \sqf(T) \equiv 1 (4)}}\left(\frac{\sqf(T)}{f}\right) \theta(T)\sum_{\substack{ N < p \leq 2N \\ p \equiv f (d)}}\log p +O\left(\frac{N \log N} {\exp(c(\log N)^{1/2})}\right).
\end{align*}
Suppose $4 \mid d$ and $8 \nmid d$, then by using Lemma \ref{charsum lemma} along with Lemma \ref{charsum at d lemma} and partial summation formula, we obtain
\begin{align*}
\sum_{\substack{p \in S(N, \theta) \\ p \equiv f (d)}} \log p = \frac{1}{2^{|S|}}\sum\limits_{\substack{T \subseteq S \\ \sqf(T) \mid d \\ \sqf(T) \equiv 1,3  (4)}}\left(\frac{\sqf(T)}{f}\right) \theta(T)\sum_{\substack{ N < p \leq 2N \\ p \equiv f (d)}}\log p +O\left(\frac{N \log N} {\exp(c(\log N)^{1/2})}\right).
\end{align*}
Suppose $8 \mid d$, then by using Lemma \ref{charsum lemma} along with Lemma \ref{charsum at d lemma} and partial summation formula, we obtain
\begin{align*}
\sum_{\substack{p \in S(N, \theta) \\ p \equiv f (d)}} \log p = \frac{1}{2^{|S|}}\sum\limits_{\substack{T \subseteq S \\ \sqf(T) \mid d}}\left(\frac{\sqf(T)}{f}\right) \theta(T)\sum_{\substack{ N < p \leq 2N \\ p \equiv f (d)}}\log p +O\left(\frac{N \log N} {\exp(c(\log N)^{1/2})}\right).
\end{align*}
Since $d \leq (\log N)^{A}$ for $A>0$, by using Siegel-Walfisz Theorem we obtain,
\begin{align*}
\sum_{\substack{p \in S(N, \theta) \\ p \equiv f (d) }} \log p= \frac{C(S, \theta, d)}{2^{|S|}}\cdot\frac{N}{\varphi(d)}+O\left(\frac{N \log N} {\exp(c(\log N)^{1/2})}\right)\text{,}
\end{align*}
where $C(S, \theta, d)$ is defined as in \eqref{mainterm cons}. This completes the proof of Theorem \ref{residue thm}.

\section{Some Combinatorial Lemmas and Corollaries of Theorem \ref{residue thm}}\label{cor of thm 1}
Let $S$ be a finite set of non-zero integers and $\mathbf{P}(S)$ denote the set of all subsets of $S$. It is well-known that $(\mathbf{P}(S), \triangle, \cap)$ is a commutative ring (Boolean ring). In fact, $(\mathbf{P}(S), \triangle)$ is an abelian group with the identity element $\phi$ which is isomorphic to $\mathbb{F}_{2} \times \dots \times \mathbb{F}_{2}$ ($|S|$ times), where $\phi$ denotes the empty subset.  
For any given positive integer $d$, we define the following subsets of $\mathbf{P}(S)$.
\begin{align}\label{d-subsets}
\mathcal{D}_{0}=& \left\{T \in \mathbf{P}(S): \sqf(T) \mid d\right\}\nonumber\\
\mathcal{D}_{1}=& \left\{T \in \mathbf{P}(S): \sqf(T) \mid d \ \text{and} \ \sqf(T) \equiv 1 \ \text{or} \ 3 \Mod 4 \right\}\nonumber\\
\mathcal{D}_{2}=& \left\{T \in \mathbf{P}(S): \sqf(T) \mid d \ \text{and} \ \sqf(T) \equiv 1 \Mod 4 \right\}.
\end{align}
Note that $\mathcal{D}_{2} \subseteq \mathcal{D}_{1} \subseteq \mathcal{D}_{0}$. In the following lemma, we prove that $\mathcal{D}_{0}, \mathcal{D}_{1}$ and $\mathcal{D}_{2}$ are subgroups of $\mathbf{P}(S)$.
\begin{lemma}\label{d-subgps lem}
$\mathcal{D}_{0}, \mathcal{D}_{1}$ and $\mathcal{D}_{2}$ are subgroups of $\mathbf{P}(S)$. Moreover, if there exists $T_{1} \in \mathbf{P}(S)$ such that $\sqf(T_{1}) \equiv 2 \Mod 4$ then $\mathcal{D}_{1}$ is an index 2 subgroup of $\mathcal{D}_{0}$. Also, if there exists $T_{1} \in \mathbf{P}(S)$ such that $\sqf(T_{1}) \equiv 3 \Mod 4$ then $\mathcal{D}_{2}$ is an index 2 subgroup of $\mathcal{D}_{1}$.
\end{lemma}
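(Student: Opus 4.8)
The plan is to exhibit $\mathcal{D}_0, \mathcal{D}_1, \mathcal{D}_2$ as kernels (more precisely, as preimages of subgroups) under suitable group homomorphisms out of $(\mathbf{P}(S), \triangle)$, which yields both the subgroup property and the two index computations at once. The whole argument rests on one elementary identity: for every prime $q$ and all $T_1, T_2 \in \mathbf{P}(S)$,
\[
v_q(\sqf(T_1 \triangle T_2)) \equiv v_q(\sqf(T_1)) + v_q(\sqf(T_2)) \pmod 2,
\]
where $v_q$ is the $q$-adic valuation. This holds because $v_q(\sqf(T)) \equiv \sum_{s \in T} v_q(s) \pmod 2$ and the elements of $T_1 \cap T_2$ contribute twice to the right-hand side, hence cancel modulo $2$; equivalently, $T \mapsto \prod_{s \in T} s$ descends to a homomorphism $\Phi \colon \mathbf{P}(S) \to \mathbb{Q}^\times/(\mathbb{Q}^\times)^2$ sending $T$ to the class of $\sqf(T)$.

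First I would prove the subgroup property directly from this identity. Each $\mathcal{D}_i$ contains $\phi$ (as $\sqf(\phi)=1$), and every element of $\mathbf{P}(S)$ is its own inverse under $\triangle$, so only closure needs checking. For $\mathcal{D}_0$: if $\sqf(T_1), \sqf(T_2) \mid d$ and $v_q(\sqf(T_1 \triangle T_2))=1$ for some prime $q$, the displayed congruence forces $v_q(\sqf(T_1))+v_q(\sqf(T_2))$ to be odd, whence $v_q(d) \geq 1$; comparing valuations at every $q$ gives $\sqf(T_1 \triangle T_2) \mid d$. For $\mathcal{D}_1$ one adds the case $q=2$: if $\sqf(T_1), \sqf(T_2)$ are odd then $v_2(\sqf(T_1 \triangle T_2)) \equiv 0 \pmod 2$ and, being at most $1$, it vanishes, so the squarefree part stays odd. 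For $\mathcal{D}_2$ I would use that for odd squarefree $a, b$ one has $\sqf(ab) \equiv ab \pmod 4$ (the discarded square factor is an odd square, hence $\equiv 1 \pmod 8$); applying this to $a = \sqf(T_1)$, $b = \sqf(T_2)$ shows $\sqf(T_1 \triangle T_2) \equiv 1 \pmod 4$ whenever both are.

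For the index statements I would package these observations as two homomorphisms into $\mathbb{Z}/2$. Define $\lambda \colon \mathbf{P}(S) \to \mathbb{Z}/2$ by $\lambda(T) = v_2(\sqf(T)) \bmod 2$; the identity with $q=2$ makes $\lambda$ a homomorphism, and by definition $\mathcal{D}_1 = \mathcal{D}_0 \cap \ker \lambda = \ker(\lambda|_{\mathcal{D}_0})$. Hence $[\mathcal{D}_0 : \mathcal{D}_1] = |\lambda(\mathcal{D}_0)| \in \{1,2\}$, equal to $2$ exactly when some $T_1 \in \mathcal{D}_0$ has $\sqf(T_1)$ even, i.e. $\sqf(T_1) \equiv 2 \pmod 4$. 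Similarly, on the subgroup $\ker \lambda$ of subsets with odd squarefree part, define $\mu(T)=0$ or $1$ according as $\sqf(T) \equiv 1$ or $3 \pmod 4$; the congruence $\sqf(T_1 \triangle T_2) \equiv \sqf(T_1)\sqf(T_2) \pmod 4$ from the previous paragraph makes $\mu$ a homomorphism (identifying $\{1,3\}$ under multiplication mod $4$ with $\mathbb{Z}/2$), and $\mathcal{D}_2 = \mathcal{D}_1 \cap \ker \mu$, so $[\mathcal{D}_1 : \mathcal{D}_2] \in \{1,2\}$, equal to $2$ precisely when some $T_1 \in \mathcal{D}_1$ has $\sqf(T_1) \equiv 3 \pmod 4$.

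The main point needing care --- and the likeliest spot for the stated hypotheses to be read too loosely --- is that the witnessing $T_1$ must actually lie in the larger group ($\mathcal{D}_0$, respectively $\mathcal{D}_1$): one needs $\sqf(T_1) \mid d$ in addition to the congruence condition, for otherwise $\lambda$ (resp. $\mu$) could restrict to the trivial map on $\mathcal{D}_0$ (resp. $\mathcal{D}_1$) and the index would be $1$. I would therefore phrase the hypotheses so that $T_1$ belongs to the ambient subgroup. A secondary bookkeeping matter is the sign of $\sqf$: allowing negative squarefree representatives leaves everything intact, since $-1 \equiv 3$, $-2 \equiv 2$, $-3 \equiv 1 \pmod 4$ preserve the classifications and an odd square is $\equiv 1 \pmod 4$ regardless of sign, so the valuation identity and both homomorphisms are unaffected.
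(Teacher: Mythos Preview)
Your proof is correct and follows essentially the same route as the paper: both exhibit $\mathcal{D}_1$ and $\mathcal{D}_2$ as kernels of homomorphisms into a two-element group (the paper's $\tau_1\colon \mathcal{D}_0\to\{\pm 1\}$ detecting parity of $\sqf(T)$ is your $\lambda$, and the paper's $\tau_2\colon \mathcal{D}_1\to(\mathbb{Z}/4)^{*}$ sending $T\mapsto \sqf(T)\bmod 4$ is your $\mu$), after first checking $\mathcal{D}_0$ is closed via the identity $\sqf(S_1\triangle S_2)=\sqf(\sqf(S_1)\cdot\sqf(S_2))$, which is exactly the content of your valuation congruence. Your caveat that the witnessing $T_1$ must lie in $\mathcal{D}_0$ (resp.\ $\mathcal{D}_1$) rather than merely in $\mathbf{P}(S)$ is well taken---the paper's proof tacitly assumes this too when it asserts surjectivity of $\tau_1$ and $\tau_2$, so the hypothesis as literally stated is slightly weaker than what is actually used.
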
 
\begin{proof}
For any $S_{1}, S_{2} \in \mathbf{P}(S)$, we observe that
\begin{equation}\label{sqf of sd of a coset}
\sqf(S_{1} \triangle S_{2})=\sqf (\sqf(S_{1}) \cdot \sqf(S_{2})).
\end{equation}
Let $S_{1}, S_{2} \in \mathcal{D}_{0}$. Equivalently, $\sqf(S_{1}) \mid d$ and $\sqf(S_{2}) \mid d$. Then by \eqref{sqf of sd of a coset} it follows that $\sqf(S_{1} \triangle S_{2}) \mid d$. Thus $\mathcal{D}_{0}$ is a subgroup of $\mathbf{P}(S)$. Now we define a map $\tau_{1}: \mathcal{D}_{0} \rightarrow \{1,-1\}$ by 
\begin{equation*}
\tau_{1}(T)=\begin{cases}
1 \quad \ \ \ \text{if} \ \sqf(T) \ \text{is odd}\\
-1 \quad  \ \text{if} \ \sqf(T)  \ \text{is even}.
\end{cases}
\end{equation*}
For any $T_{1}, T_{2} \in \mathcal{D}_{0}$, using \eqref{sqf of sd of a coset} we obtain  $\tau_{1}(T_{1} \triangle T_{2})=1$ if either both $\sqf(T_{1})$ and $ \sqf (T_{2})$ are even or both are odd and $\tau_{1}(T_{1} \triangle T_{2})=-1$ if one of $\sqf(T_{1})$ and $ \sqf (T_{2})$ is even and other one is odd. Thus $\tau_{1}(T_{1} \triangle T_{2})=\tau_{1}(T_{1})\cdot\tau_{1}(T_{2})$ for every $T_{1}, T_{2} \in \mathcal{D}_{0}$. Therefore, $\tau_{1}$ is a homomorphism with the kernel $\mathcal{D}_{1}$. Suppose there exists $T_{1} \in \mathbf{P}(S)$ such that $\sqf(T_{1}) \equiv 2 \Mod 4$, then $\tau_{1}$ is a surjective homomorphism. Thus $\mathcal{D}_{1}$ is an index $2$ subgroup of $\mathcal{D}_{0}$.

Define a map $\tau_{2}: \mathcal{D}_{1} \rightarrow (\mathbb{Z}_{4}^{*}, \odot_{4})$ by $\tau_{2}(T)= \sqf (T) \Mod 4.$ For any $T_{1}, T_{2} \in \mathcal{D}_{1}$, by \eqref{sqf of sd of a coset} we have  
\begin{align*}
\tau_{2}(T_{1} \triangle T_{2})=&\sqf (\sqf(T_{1}) \cdot \sqf(T_{2})) \Mod 4= \sqf(T_{1}) \sqf(T_{2}) \bigg(\prod\limits_{\substack{p \mid \sqf(T_{1})  \\p \mid\sqf(T_{2})}}p^{2} \bigg)^{-1} \Mod 4\\
=& \sqf(T_{1})\Mod 4 \odot_{4} \sqf(T_{2}) \Mod 4= \tau_{2}(T_{1}) \odot_{4} \tau_{2}(T_{2}).
\end{align*}
Therefore, $\tau_{2}$ is a homomorphism with the kernel $\mathcal{D}_{2}$. Suppose there exists $T_{1} \in \mathbf{P}(S)$ such that $\sqf(T_{1}) \equiv 3 \Mod 4$, then $\tau_{2}$ is a surjective homomorphism. Thus $\mathcal{D}_{2}$ is an index $2$ subgroup of $\mathcal{D}_{1}$.
\end{proof}

\begin{lemma}\label{homo lem}
Let $S$ be a finite set of non-zero integers with a choice  of signs $\theta$ for $S$. Let $H$ be a subgroup of $\mathbf{P}(S)$ and $f$ be a squarefree positive integer such that $(f, \sqf(T))=1$ for every $T \in H$. Define 
$$\mu_{f}^{\theta}: H \rightarrow \{1,-1\} \ \ \ \text{by} \ \ \ \ \mu_{f}^{\theta}(T)= \theta(T) \cdot \left(\frac{\sqf(T)}{f}\right).$$ Then $\mu_{f}^{\theta}$ is a homomorphism. Suppose there exists $T \in H$ such that $\theta(T) \neq \left(\frac{\sqf(T)}{f}\right)$ then $\mu_{f}^{\theta}$ is a surjective homomorphism.
\end{lemma}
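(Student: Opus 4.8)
The plan is to realize $\mu_{f}^{\theta}$ as the pointwise product of two group homomorphisms from $(H, \triangle)$ into the multiplicative group $\{1,-1\}$, and then to read off surjectivity from the elementary fact that a homomorphism onto a two-element group is either trivial or onto. So first I would treat the two factors separately, then combine, then handle surjectivity.

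The first factor is the sign map $T \mapsto \theta(T) = \prod_{s \in T}\theta(s)$. That this is a homomorphism on $(\mathbf{P}(S), \triangle)$, and hence on $H$, is immediate from $T_{1} \triangle T_{2} = (T_{1} \cup T_{2}) \setminus (T_{1} \cap T_{2})$: in the product $\theta(T_{1})\theta(T_{2})$ each element of $T_{1} \cap T_{2}$ contributes a factor $\theta(s)^{2}=1$, while each element of $T_{1} \triangle T_{2}$ contributes exactly once, so $\theta(T_{1})\theta(T_{2}) = \theta(T_{1} \triangle T_{2})$.

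The second factor, the Jacobi-symbol map $\lambda \colon T \mapsto \left(\frac{\sqf(T)}{f}\right)$, is the step I expect to be the crux. Starting from identity \eqref{sqf of sd of a coset}, $\sqf(T_{1} \triangle T_{2}) = \sqf(\sqf(T_{1})\cdot \sqf(T_{2}))$, and reusing the computation already carried out for $\tau_{2}$ in Lemma \ref{d-subgps lem}, one obtains
$$\sqf(T_{1})\sqf(T_{2}) = \sqf(T_{1} \triangle T_{2})\cdot g^{2}, \qquad g = \prod_{\substack{p \mid \sqf(T_{1})\\ p \mid \sqf(T_{2})}} p.$$
Here the coprimality hypothesis $(f, \sqf(T))=1$ for every $T \in H$ is exactly what is needed: since $g \mid \sqf(T_{1})$ we get $(f,g)=1$, so $\left(\frac{g^{2}}{f}\right) = \left(\frac{g}{f}\right)^{2} = 1$ rather than $0$. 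Complete multiplicativity of the Jacobi symbol in its top argument then gives $\lambda(T_{1})\lambda(T_{2}) = \lambda(T_{1}\triangle T_{2})$. The main obstacle is precisely this potential vanishing of the symbol on the square factor; the hypothesis $(f,\sqf(T))=1$ is the safeguard, which is why it appears in the statement.

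Finally, since $\{1,-1\}$ is abelian, the pointwise product $\mu_{f}^{\theta} = \theta \cdot \lambda$ of two homomorphisms is again a homomorphism, namely $\mu_{f}^{\theta}(T_{1}\triangle T_{2}) = \theta(T_{1})\theta(T_{2})\lambda(T_{1})\lambda(T_{2}) = \mu_{f}^{\theta}(T_{1})\mu_{f}^{\theta}(T_{2})$. For surjectivity I would use that a homomorphism into $\{1,-1\}$ is onto if and only if it attains the value $-1$; and $\mu_{f}^{\theta}(T) = -1$ holds exactly when the two signs $\theta(T)$ and $\left(\frac{\sqf(T)}{f}\right)$ disagree, which is the stated hypothesis. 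Hence the existence of a single such $T$ forces $\mu_{f}^{\theta}$ to be surjective, completing the argument.
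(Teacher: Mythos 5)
Your proposal is correct and follows essentially the same route as the paper: the same splitting into the sign factor $\theta(T)$ and the Jacobi-symbol factor, the same use of the identity $\sqf(T_{1}\triangle T_{2})=\sqf(\sqf(T_{1})\sqf(T_{2}))$ together with complete multiplicativity, and the same trivial surjectivity observation. Your explicit remark that the hypothesis $(f,\sqf(T))=1$ is what prevents the symbol from vanishing on the square factor $g^{2}$ is a point the paper's proof passes over silently, but it is a clarification of the same argument rather than a different one.
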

\begin{proof}
For any $T_{1}, T_{2} \in H$, we observe that
\begin{equation*}\label{theta mult}
\theta(T_{1} \triangle T_{2})= \prod\limits_{s \in T_{1} \triangle T_{2}}\theta(s)= \left(\prod\limits_{s \in T_{1}}\theta(s)\right) \left(\prod\limits_{s \in T_{2}}\theta(s)\right) \left(\prod\limits_{s \in T_{1} \cap T_{2}}(\theta(s))^{2}\right)^{-1}=\theta(T_{1}) \theta(T_{2}).
\end{equation*}
Also, by completely multiplicative property of Jacobi symbol, we have
\begin{equation*}
\left(\frac{\sqf(T_{1})}{f}\right)\cdot\left(\frac{\sqf(T_{2})}{f}\right)=\left(\frac{\sqf(T_{1})\sqf(T_{2})}{f}\right)=\left(\frac{\sqf(\sqf(T_{1})\sqf(T_{2}))}{f}\right)=\left(\frac{\sqf(T_{1}\triangle T_{2})}{f}\right).
\end{equation*}
Thus, $\mu_{f}^{\theta}(T_{1}\triangle T_{2})=\mu_{f}^{\theta}(T_{1})\cdot\mu_{f}^{\theta}(T_{2})$ holds for every $T_{1}, T_{2} \in H$. Hence $\mu_{f}^{\theta}$ is a homomorphism.
\end{proof}

\vskip 2mm
In the following lemma, we show $C(S, \theta, d) \geq 0$ and discuss the necessary and sufficient condition for $C(S, \theta, d) > 0$. 
\begin{lemma}\label{mainterm lem} 
Let $S$, $\theta$, $f$ and $d$ be stated as in Theorem \ref{residue thm}. Let $\mathcal{D}_{0}, \mathcal{D}_{1}$ and $\mathcal{D}_{2}$ be subsets of $\mathbf{P}(S)$ as defined in $\eqref{d-subsets}$. Then the constant $C(S, \theta, d)$ appearing in the main term of Theorem \ref{residue thm} is always non-negative. Moreover, 
\begin{enumerate}[label=(\roman*)] 
\item if $4 \nmid d$, then $C(S, \theta, d) >0$ holds if and only if $\theta(T)=\left(\frac{\sqf(T)}{f}\right)$ for every $T \in \mathcal{D}_{2}$,
\item if $4 \mid d$ and $8 \nmid d$, then $C(S, \theta, d) >0$ holds if and only if $\theta(T) =\left(\frac{\sqf(T)}{f}\right)$ for every $T \in \mathcal{D}_{1}$,
\item if $8 \mid d$, then $C(S, \theta, d) >0$ holds if and only if $\theta(T)=\left(\frac{\sqf(T)}{f}\right)$ for every $T \in \mathcal{D}_{0}$.
\end{enumerate}
\end{lemma}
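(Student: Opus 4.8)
The plan is to observe that, in each of the three cases, the constant $C(S,\theta,d)$ of \eqref{mainterm cons} is literally the sum over a subgroup of $\mathbf{P}(S)$ of the homomorphism $\mu_f^\theta$ introduced in Lemma \ref{homo lem}, and then to invoke the elementary fact that a $\{1,-1\}$-valued homomorphism on a finite group sums to the order of the group when it is trivial and to zero otherwise. First I would match the summation conditions in \eqref{mainterm cons} against the definitions in \eqref{d-subsets}: the conditions ``$\sqf(T)\mid d,\ \sqf(T)\equiv 1\ (4)$'', ``$\sqf(T)\mid d,\ \sqf(T)\equiv 1,3\ (4)$'', and ``$\sqf(T)\mid d$'' are exactly the defining conditions of $\mathcal{D}_2$, $\mathcal{D}_1$, and $\mathcal{D}_0$. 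Writing $\mathcal{D}$ for the relevant subgroup in each case ($\mathcal{D}_2$ if $4\nmid d$, $\mathcal{D}_1$ if $4\mid d$ and $8\nmid d$, and $\mathcal{D}_0$ if $8\mid d$), this gives
\begin{equation*}
C(S,\theta,d)=\sum_{T\in\mathcal{D}}\theta(T)\left(\frac{\sqf(T)}{f}\right)=\sum_{T\in\mathcal{D}}\mu_f^{\theta}(T).
\end{equation*}

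Next I would verify that the hypotheses of Lemma \ref{homo lem} hold with $H=\mathcal{D}$: each $T\in\mathcal{D}$ satisfies $\sqf(T)\mid d$, and since $(f,d)=1$ we get $(f,\sqf(T))=1$, so $\mu_f^{\theta}$ is a homomorphism $\mathcal{D}\to\{1,-1\}$. I would then apply the orthogonality dichotomy: for a homomorphism $\mu:H\to\{1,-1\}$ of a finite group, either $\mu\equiv 1$, whence $\sum_{T\in H}\mu(T)=|H|$, or $\mu$ is surjective, whence $\ker\mu$ has index $2$, exactly half of the elements map to each sign, and $\sum_{T\in H}\mu(T)=0$. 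Since $\phi\in\mathcal{D}$ (as $\sqf(\phi)=1$ divides $d$ and is $\equiv 1\ (4)$), the group $\mathcal{D}$ is nonempty, so $|\mathcal{D}|\geq 1$ and $C(S,\theta,d)\in\{0,|\mathcal{D}|\}$ is always non-negative. Moreover $C(S,\theta,d)>0$ if and only if $\mu_f^{\theta}$ is trivial on $\mathcal{D}$; because $\theta(T)$ and $\left(\frac{\sqf(T)}{f}\right)$ both lie in $\{1,-1\}$, triviality is equivalent to $\theta(T)=\left(\frac{\sqf(T)}{f}\right)$ for every $T\in\mathcal{D}$. Reading this off for $\mathcal{D}=\mathcal{D}_2,\mathcal{D}_1,\mathcal{D}_0$ yields the three stated equivalences (i)--(iii).

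The argument is a clean application of Lemma \ref{homo lem} together with the character-orthogonality dichotomy, so I do not expect a genuine obstacle. The only points requiring care are confirming that the summation ranges in \eqref{mainterm cons} coincide exactly with the subgroups $\mathcal{D}_i$ and that the coprimality hypothesis of Lemma \ref{homo lem} is satisfied (which follows from $\sqf(T)\mid d$ and $(f,d)=1$); after that, both the non-negativity and the positivity criterion fall directly out of the trivial/nontrivial split of the homomorphism sum.
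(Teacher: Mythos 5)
Your proposal is correct and follows essentially the same route as the paper: rewrite $C(S,\theta,d)$ as $\sum_{T\in\mathcal{D}_i}\mu_f^{\theta}(T)$, check the coprimality hypothesis of Lemma \ref{homo lem} via $\sqf(T)\mid d$ and $(f,d)=1$, and conclude by the trivial/nontrivial dichotomy for a $\{1,-1\}$-valued homomorphism on a finite group. No gaps.
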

\begin{proof}
By using \eqref{d-subsets}, we rewrite \eqref{mainterm cons} as
\begin{equation}\label{mainterm cons in D-1}
C(S, \theta, d)=\begin{cases}
\sum\limits_{T \in \mathcal{D}_{2}} \left(\frac{\sqf(T)}{f}\right)\theta(T) &\ \text{if} \ 4 \nmid d,\\
\sum\limits_{T \in \mathcal{D}_{1}} \left(\frac{\sqf(T)}{f}\right)\theta(T) &\ \text{if} \ 4 \mid d, 8 \nmid d,\\
\sum\limits_{T \in \mathcal{D}_{0}} \left(\frac{\sqf(T)}{f}\right)\theta(T) &\ \text{if} \ 8 \mid d.\\
\end{cases}
\end{equation}
From Lemma \ref{d-subgps lem}, we know that $\mathcal{D}_{0}, \mathcal{D}_{1}$ and $\mathcal{D}_{2}$ are subgroups of $\mathbf{P}(S)$. Since $(f,d)=1$ and $\sqf(T) \mid d$ we have $(f, \sqf(T))=1$ for every $T \in \mathcal{D}_{i}$, for $i=0,1,2$. Hence, it follows from Lemma \ref{homo lem} that $\mu_{f}^{\theta}: \mathcal{D}_{i} \rightarrow \{1,-1\}$ is a homomorphism for $1=0,1,2$. Thus, $C(S, \theta, d) \geq 0$ always holds. 

\noindent
In each case the corresponding sum in \eqref{mainterm cons in D-1} is positive if and only if $\theta(T) =\left(\frac{\sqf(T)}{f}\right)$ for every $T \in \mathcal{D}_{i}$, for $i=0,1,2$. Thus, in each case $C(S, \theta, d) >0$ holds if and only if $\theta(T) =\left(\frac{\sqf(T)}{f}\right)$ for every $T \in \mathcal{D}_{i}$, for $i=0,1,2$. This completes the proof of Lemma \ref{mainterm lem}.
\end{proof}

\subsection{Some Corollaries of Theorem \ref{residue thm}}
\begin{remark}\label{rmk suf}
Given $S \subseteq \mathbb{Z} \setminus \{0 \}$ with a choice of signs $\theta$ for $S$ and positive integers $f, d$  with $1\leq f \leq d$ and $(f,d)=1$. Suppose $4 \nmid d$ and there is a subset $T \in \mathcal{D}_{2}$ such that $\theta(T) \neq \left(\frac{\sqf(T)}{f}\right)$, then there is no prime $p$ of the form $p \equiv f \Mod d$ such that $S$ has residue pattern $\theta$ modulo $p$. It is not surprising, because if such a prime $p \equiv f \Mod d$ exists then from the proof of Lemma \ref{charsum at d lemma} it is easy to see that 
\begin{align*}
-1=\left(\frac{\sqf(T)}{f}\right)\theta(T)= &\left(\frac{\sqf(T)}{f}\right)\prod_{s \in T}\theta(s)=\left(\frac{\sqf(T)}{f}\right)\prod_{s \in T}\left(\frac{s}{p}\right)\\
=&\left(\frac{\sqf(T)}{f}\right)\left(\frac{\sqf(T)}{p}\right)=\left(\frac{\sqf(T)}{f}\right)^{2}=1,
\end{align*}
which is absurd. 

For the other two cases, one can similarly show that there exists a prime $p$ of the form $p \equiv f \Mod d$ such that $S$ has residue pattern $\theta$ modulo $p$ only if the sufficient condition stated in Lemma \ref{mainterm lem} holds. Thus, by using Theorem \ref{residue thm} and Lemma \ref{mainterm lem}, we have the following corollary.  
\end{remark}

\begin{corollary}\label{inf primes cor}
Let $S$ be a finite set of non-zero integers with a choice of signs $\theta$ for $S$. Let $f, d$ be positive integers with $1\leq f \leq d$ and $(f,d)=1$. 
\begin{description}
\item [Case 1] $4 \nmid d$ \\
There exist infinitely many primes $p$ of the form $p \equiv f \Mod d$ such that $S$ has residue pattern $\theta$ modulo $p$ if and only if $\theta(T)=\left(\frac{\sqf(T)}{f}\right)$ for every $T \in \mathcal{D}_{2}$. In this case, the asymptotic density of primes of the form $p \equiv f \Mod d$ for which $S$ has residue pattern $\theta$ modulo $p$ is  $\frac{|\mathcal{D}_{2}|}{\varphi(d)2^{|S|}}$.  
\item [Case 2] $4 \mid d$ and $8 \nmid d$ \\
There exist infinitely many primes $p$ of the form $p \equiv f \Mod d$ such that $S$ has residue pattern $\theta$ modulo $p$ if and only if $\theta(T)=\left(\frac{\sqf(T)}{f}\right)$ for every $T \in \mathcal{D}_{1}$. In this case, the asymptotic density of primes of the form $p \equiv f \Mod d$ for which $S$ has residue pattern $\theta$ modulo $p$ is  $\frac{|\mathcal{D}_{1}|}{\varphi(d)2^{|S|}}$.  
\item [Case 3] $8 \mid d$ \\
There exist infinitely many primes $p$ of the form $p \equiv f \Mod d$ such that $S$ has residue pattern $\theta$ modulo $p$ if and only if $\theta(T)=\left(\frac{\sqf(T)}{f}\right)$ for every $T \in \mathcal{D}_{0}$. In this case, the asymptotic density of primes of the form $p \equiv f \Mod d$ for which $S$ has residue pattern $\theta$ modulo $p$ is  $\frac{|\mathcal{D}_{0}|}{\varphi(d)2^{|S|}}$.   
\end{description}
\end{corollary}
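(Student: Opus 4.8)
The plan is to combine the quantitative estimate of Theorem~\ref{residue thm} with the algebraic characterization of positivity in Lemma~\ref{mainterm lem} and the vanishing argument of Remark~\ref{rmk suf}. I would treat the three cases uniformly: write $\mathcal{D}$ for the relevant subgroup of $\mathbf{P}(S)$, namely $\mathcal{D}_{2}$ in Case 1, $\mathcal{D}_{1}$ in Case 2, and $\mathcal{D}_{0}$ in Case 3, so that by \eqref{mainterm cons in D-1} the constant is in each case $C(S,\theta,d)=\sum_{T \in \mathcal{D}} \left(\frac{\sqf(T)}{f}\right)\theta(T)$.

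For the equivalence, suppose first that $\theta(T)=\left(\frac{\sqf(T)}{f}\right)$ for every $T \in \mathcal{D}$. By Lemma~\ref{mainterm lem} this is precisely the condition $C(S,\theta,d)>0$, and then Theorem~\ref{residue thm} gives
\[
\sum_{\substack{p \in S(N,\theta) \\ p \equiv f (d)}} \log p = \frac{C(S,\theta,d)}{2^{|S|}}\cdot \frac{N}{\varphi(d)} + O\!\left(\frac{N\log N}{\exp(c(\log N)^{1/2})}\right),
\]
whose main term grows linearly in $N$ while the error is of smaller order; hence the left-hand side is unbounded and there are infinitely many primes $p \equiv f \Mod d$ for which $S$ has residue pattern $\theta$. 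Conversely, if the equality fails for some $T \in \mathcal{D}$, then Remark~\ref{rmk suf} shows there is no prime $p \equiv f \Mod d$ realizing $\theta$ at all, and a fortiori not infinitely many. This settles the stated biconditional in all three cases.

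For the density, note that when the positivity condition holds every summand in $C(S,\theta,d)=\sum_{T\in\mathcal{D}}\left(\frac{\sqf(T)}{f}\right)\theta(T)$ equals $\left(\frac{\sqf(T)}{f}\right)^{2}=1$, since $\theta(T)=\left(\frac{\sqf(T)}{f}\right)\in\{1,-1\}$; hence $C(S,\theta,d)=|\mathcal{D}|$. To turn the dyadic estimate of Theorem~\ref{residue thm} into a density I would decompose $(1,x]$ into the intervals $(x/2^{k+1}, x/2^{k}]$, apply the theorem on each block, and sum the resulting geometric series, obtaining
\[
\sum_{\substack{p \leq x,\ p \equiv f(d) \\ S \ \text{has pattern}\ \theta}} \log p = \frac{|\mathcal{D}|}{2^{|S|}\varphi(d)}\, x + O\!\left(\frac{x\log x}{\exp(c'(\log x)^{1/2})}\right),
\]
the errors summing to an admissible term. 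Partial summation converts this into $\#\{p\le x:\ p\equiv f\Mod d,\ S\ \text{has pattern}\ \theta\} \sim \frac{|\mathcal{D}|}{2^{|S|}\varphi(d)}\cdot\frac{x}{\log x}$, and dividing by $\pi(x)\sim x/\log x$ yields the asymptotic density $\frac{|\mathcal{D}|}{\varphi(d)2^{|S|}}$.

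The main obstacle is this final step: the clean dyadic-block asymptotic must be assembled into a statement valid up to $x$ and then into a genuine density, which needs the geometric summation of the error terms, an Abel-summation step, and the prime number theorem for the denominator. By contrast, the biconditional is a direct translation through Lemma~\ref{mainterm lem} and Remark~\ref{rmk suf}, requiring no new estimates.
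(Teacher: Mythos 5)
Your proposal is correct and follows essentially the same route the paper intends: the paper gives no separate proof of Corollary~\ref{inf primes cor}, stating only that it follows from Theorem~\ref{residue thm}, Lemma~\ref{mainterm lem} and Remark~\ref{rmk suf}, which is exactly how you argue the biconditional. Your additional observation that $C(S,\theta,d)=|\mathcal{D}_i|$ when the condition holds, together with the dyadic summation and comparison with $\pi(x)$, correctly supplies the density computation the paper leaves implicit.
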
 

\begin{corollary}\label{allpattern cor}
Let $f, d$ be positive integers with $1\leq f \leq d$ and $(f,d)=1$.
\begin{description}
\item [Case 1] $4 \nmid d$ \\
A nonempty finite set $S \subset \mathbb{Z} \setminus \{0\}$ supports all residue patterns for infinitely many primes $p$ of the form $p \equiv f \Mod d$ if and only if $\mathcal{D}_{2}=\{\phi\}$. 
\item [Case 2] $4 \mid d$ and $8 \nmid d$ \\
A nonempty finite set $S \subset \mathbb{Z} \setminus \{0\}$ supports all residue patterns for infinitely many primes $p$ of the form $p \equiv f \Mod d$ if and only if $\mathcal{D}_{1}=\{\phi\}$.
\item [Case 3] $8 \mid d$ \\
A nonempty finite set $S \subset \mathbb{Z} \setminus \{0\}$ supports all residue patterns for infinitely many primes $p$ of the form $p \equiv f \Mod d$ if and only if $\mathcal{D}_{0}=\{\phi\}$. 
\end{description}  
In each case, for every choice of signs $\theta : S \rightarrow \{-1, 1\}$ the density of the set $S(N, \theta)$ is $\frac{1}{\varphi(d)2^{|S|}}$.
\end{corollary}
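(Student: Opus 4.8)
The plan is to derive this directly from Corollary \ref{inf primes cor} by quantifying over all choices of signs. By definition, $S$ supports all residue patterns for infinitely many primes $p \equiv f \Mod d$ exactly when, for \emph{every} $\theta \colon S \to \{-1,1\}$, there are infinitely many such primes realizing $\theta$. Writing $\mathcal{D}$ for $\mathcal{D}_2$, $\mathcal{D}_1$, $\mathcal{D}_0$ in Cases 1, 2, 3 respectively, Corollary \ref{inf primes cor} turns this into the single uniform requirement that
\[
\theta(T) = \left(\frac{\sqf(T)}{f}\right) \quad \text{for every } T \in \mathcal{D} \text{ and every } \theta .
\]
So the whole statement reduces to the equivalence: this holds for all $\theta$ if and only if $\mathcal{D} = \{\phi\}$.

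The key step I would isolate is the behaviour of the map $\theta \mapsto \theta(T)$. At $T = \phi$ one has $\theta(\phi) = 1$ and $\left(\frac{\sqf(\phi)}{f}\right) = \left(\frac{1}{f}\right) = 1$, so the condition at the empty set is satisfied by every $\theta$ and carries no information. For a nonempty $T$, fixing some $s_0 \in T$ and flipping $\theta(s_0)$ alone flips the sign of $\theta(T) = \prod_{s \in T}\theta(s)$; hence $\theta \mapsto \theta(T)$ is surjective onto $\{-1,1\}$ whenever $T \neq \phi$. This single observation drives both directions.

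With it, the equivalence is immediate. If $\mathcal{D} = \{\phi\}$, the only constraint is the vacuous one at $T = \phi$, so the uniform requirement holds for every $\theta$ and $S$ supports all patterns. Conversely, if $\mathcal{D} \neq \{\phi\}$, I would pick a nonempty $T_0 \in \mathcal{D}$ and, by the surjectivity above, choose $\theta_0$ with $\theta_0(T_0) = -\left(\frac{\sqf(T_0)}{f}\right)$ --- concretely, setting $\theta_0 \equiv 1$ off a chosen $s_0 \in T_0$ and $\theta_0(s_0) = -\left(\frac{\sqf(T_0)}{f}\right)$ works; Corollary \ref{inf primes cor} then denies infinitely many primes $p \equiv f \Mod d$ realizing $\theta_0$, so $S$ does not support all patterns. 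This settles Cases 1--3 at once. Finally, the density claim is read straight off Corollary \ref{inf primes cor}: once $\mathcal{D} = \{\phi\}$ we have $|\mathcal{D}| = 1$, whence for every $\theta$ the density equals $\frac{|\mathcal{D}|}{\varphi(d)2^{|S|}} = \frac{1}{\varphi(d)2^{|S|}}$. The argument is essentially routine; the only point needing care is the forward direction, namely recognizing that a single nonempty member of $\mathcal{D}$ already destroys universality through the sign-flip construction.
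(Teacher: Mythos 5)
Your proof is correct and takes essentially the same route the paper intends: the corollary is stated there without an explicit argument, as an immediate consequence of Corollary \ref{inf primes cor}, and your reduction to the sign-flip observation (that $\theta \mapsto \theta(T)$ is surjective onto $\{-1,1\}$ for nonempty $T$, while the condition at $T=\phi$ is vacuous) correctly supplies the details, including the density $\frac{1}{\varphi(d)2^{|S|}}$ from $|\mathcal{D}_i|=1$.
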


\vspace{1mm}
For example, let $S$ be a nonempty finite set of primes that does not divide $d$, then it is easy to see that $\mathcal{D}_{0}=\{\phi\}$. Therefore, every nonempty finite set of primes that does not divide $d$ supports all residue patterns for infinitely many primes $p$ of the form $p \equiv f \Mod d$. Here, we mention that the Corollary \ref{allpattern cor} is an analogue of primes in arithmetic progression of (Theorem 2, \cite{MFDR89}) and (Theorem 4.3 of \cite{WRIGHT07}).

\begin{corollary} \label{res cor}
Let $f, d$ be positive integers with $1\leq f \leq d$ and $(f,d)=1$. Necessary and sufficient condition for a nonempty finite set $S \subset \mathbb{Z} \setminus \{0\}$ to be a set of quadratic residue for infinitely many primes of the form $p \equiv f \Mod d$ is given below for each case.
\begin{description}
\item [Case 1] $4 \nmid d$ \\
$\left(\frac{\sqf(T)}{f}\right)=1$, for every $T \in \mathcal{D}_{2}$. 
\item [Case 2] $4 \mid d$ and $8 \nmid d$ \\
$\left(\frac{\sqf(T)}{f}\right)=1$, for every $T \in \mathcal{D}_{1}$.
\item [Case 3] $8 \mid d$ \\
$\left(\frac{\sqf(T)}{f}\right)=1$, for every $T \in \mathcal{D}_{0}$. 
\end{description}
The asymptotic density of primes of the form $p \equiv f \Mod d$ for which all the elements of $S$ are quadratic residues modulo $p$ is 
\begin{equation}\label{density res}
\frac{|\mathcal{D}_{2}|}{\varphi(d)2^{|S|}} \ \ \text{if} \ 4 \nmid d, \ \frac{|\mathcal{D}_{1}|}{\varphi(d)2^{|S|}} \ \ \text{if} \ 4 \mid d, 8 \nmid d \ \text{and} \ \ \frac{|\mathcal{D}_{0}|}{\varphi(d)2^{|S|}} \ \ \text{if} \ 8 \mid d.
\end{equation}
\end{corollary}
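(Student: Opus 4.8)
The plan is to recognize that this corollary is the specialization of Corollary \ref{inf primes cor} to one distinguished choice of signs, so that essentially no new argument is needed beyond identifying the right $\theta$. A nonempty set $S$ consists entirely of quadratic residues modulo $p$ precisely when $\left(\frac{s}{p}\right)=1$ for all $s \in S$; in the language of the paper this says exactly that $S$ has residue pattern $\theta_{0}$ modulo $p$, where $\theta_{0}$ is the constant choice of signs defined by $\theta_{0}(s)=1$ for every $s \in S$. Thus ``$S$ is a set of quadratic residues for infinitely many primes $p \equiv f \Mod d$'' is literally the statement of Corollary \ref{inf primes cor} applied to $\theta = \theta_{0}$.

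First I would record the effect of this choice on the quantity $\theta(T)$. By the definition in \eqref{prod theta}, for the constant choice of signs we have
\begin{equation*}
\theta_{0}(T)=\prod_{s \in T}\theta_{0}(s)=\prod_{s \in T}1=1
\qquad\text{for every } T \in \mathbf{P}(S).
\end{equation*}
Consequently the governing condition $\theta(T)=\left(\frac{\sqf(T)}{f}\right)$ appearing in each case of Corollary \ref{inf primes cor} collapses to $\left(\frac{\sqf(T)}{f}\right)=1$, ranging over $\mathcal{D}_{2}$, $\mathcal{D}_{1}$, or $\mathcal{D}_{0}$ according as $4 \nmid d$, $4 \mid d$ with $8 \nmid d$, or $8 \mid d$. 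This matches the three displayed criteria verbatim.

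Next I would transport the density assertion. In each case Corollary \ref{inf primes cor} furnishes the asymptotic density $\frac{|\mathcal{D}_{i}|}{\varphi(d)2^{|S|}}$ for the primes $p \equiv f \Mod d$ realizing the pattern, and this expression does not depend on the particular $\theta$ once the necessary-and-sufficient condition holds. Specializing to $\theta_{0}$ therefore yields the densities $\frac{|\mathcal{D}_{2}|}{\varphi(d)2^{|S|}}$, $\frac{|\mathcal{D}_{1}|}{\varphi(d)2^{|S|}}$, and $\frac{|\mathcal{D}_{0}|}{\varphi(d)2^{|S|}}$ in the three respective cases, which is exactly \eqref{density res}.

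Since the argument is a direct substitution, there is no genuine obstacle to overcome; the only point requiring care is the bookkeeping of the three divisibility regimes for $d$ and the verification that the constant map $\theta_{0}$ is an admissible choice of signs, both of which are immediate. I would simply remark that the corollary follows at once from Corollary \ref{inf primes cor} upon taking $\theta \equiv 1$, and present the specialized condition and density in each case.
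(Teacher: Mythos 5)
Your proposal is correct and is exactly the paper's argument: the paper likewise derives Corollary \ref{res cor} by specializing Corollary \ref{inf primes cor} to the constant choice of signs $\theta \equiv 1$, so that $\theta(T)=1$ collapses the criterion to $\left(\frac{\sqf(T)}{f}\right)=1$ and the densities carry over unchanged. You merely spell out the bookkeeping that the paper leaves implicit.
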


\textit{Proof:} Since we expect every element of $S$ to be quadratic residue, the choice of signs is $\theta \equiv 1$. Therefore, the proof of Corollary \ref{res cor} follows from Corollary \ref{inf primes cor}. 
\vspace{1mm}

\begin{corollary} \label{nonres cor}
Let $f, d$ be positive integers with $1\leq f \leq d$ and $(f,d)=1$. Necessary and sufficient condition for a nonempty finite set $S \subset \mathbb{Z} \setminus \{0\}$ to be a set of quadratic non-residue for infinitely many primes of the form $p \equiv f \Mod d$ is given below for each case.
\begin{description}
\item [Case 1] $4 \nmid d$ \\
$\left(\frac{\sqf(T)}{f}\right)=(-1)^{|T|}$, for every $T \in \mathcal{D}_{2}$. 
\item [Case 2] $4 \mid d$ and $8 \nmid d$ \\
$\left(\frac{\sqf(T)}{f}\right)=(-1)^{|T|}$, for every $T \in \mathcal{D}_{1}$.
\item [Case 3] $8 \mid d$ \\
$\left(\frac{\sqf(T)}{f}\right)=(-1)^{|T|}$, for every $T \in \mathcal{D}_{0}$. 
\end{description}
In each case, the asymptotic density of primes for which all the elements of $S$ are quadratic nonresidues modulo $p$ is stated as in \eqref{density res}.
\end{corollary}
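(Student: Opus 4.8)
The plan is to observe that Corollary \ref{nonres cor} is the exact mirror of Corollary \ref{res cor}: both are obtained by specializing Corollary \ref{inf primes cor} to a single, explicitly chosen sign function. Here we demand that \emph{every} element of $S$ be a quadratic non-residue modulo $p$, so the relevant choice of signs is the constant function $\theta \equiv -1$, that is $\theta(s) = -1$ for all $s \in S$. With this $\theta$ in hand, the statement should fall out immediately from Corollary \ref{inf primes cor}, with no new ingredients required.

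First I would record the effect of this choice on the product sign $\theta(T)$. For any subset $T \subseteq S$, the definition \eqref{prod theta} gives
\[
\theta(T) = \prod_{s \in T} \theta(s) = (-1)^{|T|}.
\]
Next I would feed this identity into the trichotomy of Corollary \ref{inf primes cor}. In each of its three cases the criterion for the existence of infinitely many primes $p \equiv f \Mod d$ realizing the residue pattern $\theta$ is that $\theta(T) = \left(\frac{\sqf(T)}{f}\right)$ for every $T$ in the relevant subgroup $\mathcal{D}_2$, $\mathcal{D}_1$ or $\mathcal{D}_0$. Substituting $\theta(T) = (-1)^{|T|}$ converts this into $(-1)^{|T|} = \left(\frac{\sqf(T)}{f}\right)$ on the corresponding subgroup, which is precisely the condition asserted in Cases 1, 2 and 3 respectively.

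For the density, I would simply note that the value $\frac{|\mathcal{D}_i|}{\varphi(d)2^{|S|}}$ supplied by Corollary \ref{inf primes cor} depends only on the ambient subgroup $\mathcal{D}_i$ and not on the particular sign function $\theta$; hence the densities recorded in \eqref{density res} transfer verbatim to the non-residue setting. There is no genuine obstacle here beyond bookkeeping, exactly as in the proof of Corollary \ref{res cor}. The one point deserving a moment's care is the sign computation $\theta(T) = (-1)^{|T|}$, and in particular that the empty set (which always lies in every $\mathcal{D}_i$) contributes $\theta(\phi) = 1 = (-1)^{0}$ and so imposes no constraint, keeping the criterion consistent with the group structure established in Lemma \ref{d-subgps lem}.
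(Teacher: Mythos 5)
Your proposal is correct and takes exactly the same route as the paper: specialize Corollary \ref{inf primes cor} to the constant choice of signs $\theta \equiv -1$, note that $\theta(T) = (-1)^{|T|}$, and read off both the criterion and the density. The paper's own proof is precisely this one-line reduction, so nothing further is needed.
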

\textit{Proof:} Since we expect every element of $S$ to be quadratic nonresidue, the choice of signs is $\theta \equiv -1$. Therefore, the proof of Corollary \ref{nonres cor} follows from Corollary \ref{inf primes cor}. 

\section{A counting problem arising from Theorem \ref{residue thm}}\label{counting cos}
Let $S$ be a finite set of non-zero integers. In this section, we count the number of choice of signs $\theta$ for $S$ such that $S$ has residue pattern $\theta$ modulo $p$ for infinitely many primes $p$ of the form $p \equiv f \Mod d$ (See, Corollary \ref{inf primes cor}). Given $\theta: S \rightarrow \{-1, 1\}$, we define
\begin{equation}\label{Ntheta def}
N_{\theta}=N_{\theta}(S)=\{s \in S: \theta(s)=-1\}.
\end{equation}
Let $\mathbf{F}(S)$ denote the set of all choice of signs $\theta$ on $S$, which is an abelian group under pointwise multiplication for any $\theta_{1}, \theta_{2} \in \mathbf{F}(S)$
$$ \theta_{1}\cdot\theta_{2}(s)=\theta_{1}(s) \cdot \theta_{2}(s), \ \ \ \text{for every} \ s \in S.$$
For a positive integer $d$, $\mathbb{Z}_{d}^{*}=\{1 \leq f < d : (f,d)=1\}$ is a group of units of the ring $\mathbb{Z}_{d}$ of order $\varphi(d)$. Let $\mu^{\theta}_{f}$ be a map defined as in Lemma \ref{homo lem}. We define the following subsets 
\begin{equation}\label{d-subgps defn}
\mathbf{D_{i}}(S)= \left\{(f, \theta) \in \mathbb{Z}_{d}^{*} \times \mathbf{F}(S) : \mu_{f}^{\theta}(T)= \theta(T) \bigg(\frac{\sqf(T)}{f}\bigg)=1, \ \forall \ T \in \mathcal{D}_{i} \right\}
\end{equation}
of the direct product $\mathbb{Z}_{d}^{*} \times \mathbf{F}(S)$ of the groups $\mathbb{Z}_{d}^{*} $ and $\mathbf{F}(S)$, for $i=0,1,2$. It turns out that the subsets $\mathbf{D_{i}}(S)$ are actually subgroups of $\mathbb{Z}_{d}^{*} \times \mathbf{C}(S)$ for $i=0, 1, 2$.

\begin{lemma}\label{sg lemma}
$\mathbf{D_{i}}(S)$ is a subgroup of $\mathbb{Z}_{d}^{*} \times \mathbf{F}(S)$, for $i=0, 1, 2$.
\end{lemma}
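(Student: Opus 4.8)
The plan is to realize $\mathbf{D_i}(S)$ as the simultaneous kernel of a family of $\{1,-1\}$-valued maps and then invoke the fact that an intersection of subgroups is a subgroup. For each fixed $T \in \mathcal{D}_i$, I would define
\[
\Phi_T : \mathbb{Z}_d^* \times \mathbf{F}(S) \longrightarrow \{1,-1\}, \qquad \Phi_T(f,\theta) = \theta(T)\left(\frac{\sqf(T)}{f}\right).
\]
Directly from \eqref{d-subgps defn}, a pair $(f,\theta)$ lies in $\mathbf{D_i}(S)$ if and only if $\Phi_T(f,\theta)=1$ for every $T \in \mathcal{D}_i$; that is, $\mathbf{D_i}(S) = \bigcap_{T \in \mathcal{D}_i} \ker \Phi_T$. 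Since $\mathcal{D}_i$ is finite, it therefore suffices to prove that each $\Phi_T$ is a group homomorphism, after which the claim is immediate because kernels are subgroups and a finite intersection of subgroups is again a subgroup.

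To check that $\Phi_T$ is a homomorphism, I would factor it through the two coordinates. Writing $\alpha_T(\theta)=\theta(T)$ and $\beta_T(f)=\left(\frac{\sqf(T)}{f}\right)$, we have $\Phi_T(f,\theta)=\beta_T(f)\,\alpha_T(\theta)$, and a product of this shape is a homomorphism on the direct product $\mathbb{Z}_d^* \times \mathbf{F}(S)$ as soon as $\alpha_T$ and $\beta_T$ are each homomorphisms into the abelian group $\{1,-1\}$. The map $\alpha_T : \mathbf{F}(S) \to \{1,-1\}$ is a homomorphism because $\theta(T)=\prod_{s \in T}\theta(s)$ and the group law on $\mathbf{F}(S)$ is pointwise multiplication; this is exactly the $\theta$-multiplicativity already recorded in the proof of Lemma \ref{homo lem}.

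The one step that requires genuine care, and which I expect to be the main obstacle, is showing that $\beta_T$ is a \emph{well-defined} homomorphism on $\mathbb{Z}_d^*$. Formal multiplicativity of the Jacobi/Kronecker symbol in its lower entry gives $\left(\frac{\sqf(T)}{f_1 f_2}\right)=\left(\frac{\sqf(T)}{f_1}\right)\left(\frac{\sqf(T)}{f_2}\right)$; the real content is that $\left(\frac{\sqf(T)}{f}\right)$ depends only on the residue $f \bmod d$, so that reducing the product $f_1 f_2$ modulo $d$ does not alter its value. As in Lemmas \ref{charsum lemma} and \ref{charsum at d lemma}, I would use quadratic reciprocity to rewrite $\left(\frac{\sqf(T)}{f}\right)$ as $\left(\frac{f}{\sqf(T)}\right)$ times a correction factor equal to $1$, $(-1)^{(f-1)/2}$, or $(-1)^{(f^2-1)/8}$ according as $\sqf(T)\equiv 1$, $\sqf(T)\equiv 3$, or $\sqf(T)\equiv 2 \pmod 4$. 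The factor $\left(\frac{f}{\sqf(T)}\right)$ is periodic modulo $\sqf(T)\mid d$, while the correction factor is periodic modulo $4$ or $8$. Matching each index to its governing case — $\sqf(T)\equiv 1 \pmod 4$ on $\mathcal{D}_2$, the hypothesis $4\mid d$ on $\mathcal{D}_1$, and $8\mid d$ on $\mathcal{D}_0$ — every period in play divides $d$, so $\beta_T$ factors through $\mathbb{Z}_d^*$ and is a well-defined quadratic character there.

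With each $\Phi_T$ confirmed to be a homomorphism, the subgroup conclusion follows formally: the identity $(1,\mathbf{1})$ lies in every $\ker \Phi_T$ since $\mathbf{1}(T)=1$ and $\left(\frac{\sqf(T)}{1}\right)=1$, closure under products is the homomorphism property, and closure under inverses is automatic (indeed $\left(\frac{\sqf(T)}{f^{-1}}\right)=\left(\frac{\sqf(T)}{f}\right)$ because the target $\{1,-1\}$ is $2$-torsion). Beyond the reciprocity bookkeeping of the previous paragraph, which is already carried out in the earlier lemmas and can simply be cited, I anticipate no further difficulty.
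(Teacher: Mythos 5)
Your proof is correct and is essentially the paper's argument in different packaging: the paper verifies closure directly by computing $\mu_{f}^{\theta_{1}\theta_{2}}(T)=\mu_{f_{1}}^{\theta_{1}}(T)\,\mu_{f_{2}}^{\theta_{2}}(T)$ for all $T\in\mathcal{D}_{i}$, which is exactly your statement that each $\Phi_{T}$ is a homomorphism and $\mathbf{D_{i}}(S)=\bigcap_{T}\ker\Phi_{T}$. The only real difference is that you spell out why $\left(\frac{\sqf(T)}{\cdot}\right)$ descends to a well-defined character on $\mathbb{Z}_{d}^{*}$ (matching the conductor to the hypotheses $4\mid d$ or $8\mid d$ in the relevant cases), a point the paper compresses into the single observation that $f\equiv f_{1}f_{2}\Mod{\sqf(T)}$; your version is the more careful one.
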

\begin{proof}
Here, we prove that $\mathbf{D_{0}}(S)$ is a subgroup of $\mathbb{Z}_{d}^{*} \times \mathbf{F}(S)$. The proof for $\mathbf{D_{1}}(S)$ and  $\mathbf{D_{2}}(S)$ being the subgroups follows similarly. Suppose $(f_{1},\theta_{1}), (f_{2}, \theta_{2}) \in \mathbf{D_{0}}(S)$ then $\mu_{f_{1}}^{\theta_{1}}(T)=1$ and $\mu_{f_{2}}^{\theta_{2}}(T)=1$ holds, for all $T \in \mathcal{D}_{0}$. Let $f \equiv f_{1}f_{2} \Mod d$. Since $\sqf(T) \mid d$ for every $T \in \mathcal{D}_{0}$, we have $f \equiv f_{1}f_{2} \Mod {\sqf(T)}$ for every $T \in \mathcal{D}_{0}$. Therefore, 
\begin{align*}
\mu_{f}^{\theta_{1}\theta_{2}}(T)=\theta_{1}(T) \theta_{2}(T) \bigg(\frac{\sqf(T)}{f}\bigg)=\theta_{1}(T) \bigg(\frac{\sqf(T)}{f_{1}}\bigg)\theta_{2}(T) \bigg(\frac{\sqf(T)}{f_{2}}\bigg)=1.
\end{align*} 
Thus, we obtain $\mu_{f}^{\theta_{1}\theta_{2}} \in \mathbf{D_{0}}(S)$. This completes the proof of Lemma \ref{sg lemma}.
\end{proof}

\begin{thm}\label{counting signs}
$$|\mathbf{D}_{i}(S)|= \frac{2^{|S|} \varphi(d)}{|\mathcal{D}_{i}|}, \ \text{for} \ i=0, 1, 2. $$
\end{thm}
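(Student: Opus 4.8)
The plan is to count $\mathbf{D}_{i}(S)$ by fibering over the first coordinate. Let $\pi\colon\mathbf{D}_{i}(S)\to\mathbb{Z}_{d}^{*}$ be the projection $(f,\theta)\mapsto f$. I will show that for every $f\in\mathbb{Z}_{d}^{*}$ the fiber $\pi^{-1}(f)$ has exactly $2^{|S|}/|\mathcal{D}_{i}|$ elements; since $|\mathbb{Z}_{d}^{*}|=\varphi(d)$ and no fiber is empty, summing the fiber sizes over all $f$ immediately gives $|\mathbf{D}_{i}(S)|=\varphi(d)\cdot 2^{|S|}/|\mathcal{D}_{i}|$. The structural fact driving the count is that $\mathbf{F}(S)$ is the character group of the elementary abelian $2$-group $\mathbf{P}(S)\cong\mathbb{F}_{2}^{|S|}$: every $\theta$ yields the homomorphism $T\mapsto\theta(T)=\prod_{s\in T}\theta(s)$ on $(\mathbf{P}(S),\triangle)$, and the pairing $\langle\theta,T\rangle=\theta(T)$ is nondegenerate (if $\theta\not\equiv 1$ then $\theta(\{s\})=-1$ for some $s\in S$; if $T\neq\phi$ then assigning $\theta(s)=-1$ for a single $s\in T$ gives $\theta(T)=-1$). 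I therefore identify $\mathbf{F}(S)$ with the dual space $\mathbf{P}(S)^{*}$ and may speak of restricting a character $\theta$ to the subgroup $\mathcal{D}_{i}$; I write $\widehat{\mathcal{D}_{i}}:=\mathrm{Hom}(\mathcal{D}_{i},\{1,-1\})$ and recall $|\widehat{\mathcal{D}_{i}}|=|\mathcal{D}_{i}|$.

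First I would fix $f\in\mathbb{Z}_{d}^{*}$ and observe that $\chi_{f}\colon\mathcal{D}_{i}\to\{1,-1\}$, $\chi_{f}(T)=\left(\frac{\sqf(T)}{f}\right)$, is a character of $\mathcal{D}_{i}$. This is Lemma \ref{homo lem} applied with $H=\mathcal{D}_{i}$ and the trivial sign $\theta\equiv 1$, whose hypothesis $(f,\sqf(T))=1$ holds because $(f,d)=1$ while $\sqf(T)\mid d$ for every $T\in\mathcal{D}_{i}$ (see \eqref{d-subsets}); recall also that $\mathcal{D}_{i}$ is a subgroup of $\mathbf{P}(S)$ by Lemma \ref{d-subgps lem}. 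With this notation the defining condition $\theta(T)\left(\frac{\sqf(T)}{f}\right)=1$ for all $T\in\mathcal{D}_{i}$ in \eqref{d-subgps defn} reads simply $\theta|_{\mathcal{D}_{i}}=\chi_{f}$, so that $\pi^{-1}(f)=\{\theta\in\mathbf{F}(S):\theta|_{\mathcal{D}_{i}}=\chi_{f}\}$.

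Next I would study the restriction homomorphism $r\colon\mathbf{F}(S)\to\widehat{\mathcal{D}_{i}}$, $r(\theta)=\theta|_{\mathcal{D}_{i}}$, whose kernel is the annihilator $\mathcal{D}_{i}^{\perp}=\{\theta:\theta(T)=1\ \text{for all}\ T\in\mathcal{D}_{i}\}$. The key point is that $r$ is surjective: under the identification of the first paragraph, $r$ is the transpose of the inclusion $\mathcal{D}_{i}\hookrightarrow\mathbf{P}(S)$ of $\mathbb{F}_{2}$-vector spaces, and the transpose of an injection of finite-dimensional spaces is a surjection (concretely, extend a basis of $\mathcal{D}_{i}$ to one of $\mathbf{P}(S)$ and prescribe arbitrary values on the added vectors). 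Granting this, the first isomorphism theorem gives $|\mathcal{D}_{i}^{\perp}|=|\mathbf{F}(S)|/|\widehat{\mathcal{D}_{i}}|=2^{|S|}/|\mathcal{D}_{i}|$.

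Finally, since $\chi_{f}\in\widehat{\mathcal{D}_{i}}=\Ima r$, the fiber $\pi^{-1}(f)=r^{-1}(\chi_{f})$ is a nonempty coset of $\mathcal{D}_{i}^{\perp}$, hence has exactly $|\mathcal{D}_{i}^{\perp}|=2^{|S|}/|\mathcal{D}_{i}|$ elements, independently of $f$; summing over the $\varphi(d)$ values of $f$ finishes the proof for each $i=0,1,2$. The main obstacle is the surjectivity of $r$ in the third paragraph — equivalently, that every character of the subgroup $\mathcal{D}_{i}$ extends to $\mathbf{P}(S)$. This is exactly where the elementary abelian structure of $\mathbf{P}(S)$ recorded in Section \ref{cor of thm 1} is needed; it is also what guarantees that no fiber is empty and that all fibers have the same size, which is what makes the final product as clean as stated. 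Everything else is routine bookkeeping with Lemmas \ref{homo lem} and \ref{d-subgps lem}.
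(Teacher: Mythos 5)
Your proof is correct and follows essentially the same route as the paper's: both arguments fiber $\mathbf{D}_{i}(S)$ over $f\in\mathbb{Z}_{d}^{*}$ and identify each fiber as a coset of the annihilator of $\mathcal{D}_{i}$ inside the dual of $\mathbf{P}(S)\cong\mathbb{F}_{2}^{|S|}$, which has size $2^{|S|}/|\mathcal{D}_{i}|$. The only cosmetic difference is that you justify the fiber count by the surjectivity of the restriction map on characters (basis extension plus the first isomorphism theorem), whereas the paper phrases the same fact through the explicit bilinear form on $\mathbb{F}_{2}^{n}$ and cites the dimension-count induction from Theorem 2 of \cite{KBAM21}.
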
 
 
\noindent
We require the following notations from (Section 2, \cite{KBAM21}) for the proof of the Theorem \ref{counting signs}.
\subsection{Notations}
We define a map, $\chi : (\mathbf{P}(S), \triangle) \rightarrow (\mathbb{F}_{2}^{n}, +)$, by
\begin{equation*}
\chi(T)= (\chi_{T}(a_{1}), \dots, \chi_{T}(a_{n})),
\end{equation*}
where for every subset $T \subseteq S$, $\chi_{T}: S \rightarrow \{0,1\}$ denotes the characteristic function of $T$. It is easy to see that $\chi$ is an isomorphism. Also, observe that $\mathbb{F}_{2}^{n}$ is a vector space over the field $\mathbb{F}_{2}$. For any $u=(u_{1},u_{2}, \dots, u_{n}),v=(v_{1},v_{2},\dots,v_{n}) \in \mathbb{F}_{2}^{n}$, $$\langle u, v \rangle =(u_{1}v_{1}+ \dots +u_{n}v_{n}) \Mod 2,$$  defines a symmetric bilinear form on $\mathbb{F}_{2}^{n}$. Thus for any $v \in \mathbb{F}_{2}^{n}$, the map $\phi_{v} : \mathbb{F}_{2}^{n} \rightarrow \mathbb{F}_{2}$ defined by $\phi_{v}(w)=  \langle v, w \rangle$ is a linear functional with the kernel
\begin{equation}\label{max sub}
E(v)=\{ w \in \mathbb{F}_{2}^{n}: \langle v, w \rangle=0 \}.
\end{equation}
We observe that 
\begin{equation}\label{index-2 sbgp}
\chi^{-1}(E(v))= E_{A}(S)=\{T \in \mathbf{P}(S) : |T \cap A| \ \text{is even} \}, \ \text{where} \ \chi(A)=v. 
\end{equation}
Let $D_{i}=\chi(\mathcal{D}_{i})=\{\chi(T): T \in \mathcal{D}_{i}\}$ be the subspace of $\mathbb{F}_{2}^{n}$ corresponding to $\mathcal{D}_{i}$ for $i=0,1,2$.

\vspace{1mm}
On the other hand, for any $f \in \mathbb{Z}_{d}^{*}$, we consider a map $\eta_{f}: S \rightarrow \{-1, 1\}$ by $\eta_{f}(a_{i})=\big(\frac{a_{i}}{f}\big)$, for $i=1, 2, \dots, n$. By the complete multiplicativity of Legendre symbol, $\eta_{f}$ can be defined on $\mathbf{P}(S)$ as follows
$$\eta_{f}(T)= \prod_{a_{i} \in T} \bigg(\frac{a_{i}}{f}\bigg)= \bigg(\frac{\prod_{a_{i} \in T}a_{i}}{f}\bigg)=\bigg(\frac{\sqf(T)}{f}\bigg).$$

\vspace{2mm}
\noindent
\textbf{Proof of Theorem \ref{counting signs}: }
Observe that $\mathbf{F}(S)$ is isomorphic to $\mathbf{P}(S)$ through the map $\theta  \rightarrow N_{\theta}$, and by the relation $N_{\theta_{1}\cdot\theta_{2}}=N_{\theta} \triangle N_{\theta_{2}}$, where $N_{\theta}$ is defined as in \eqref{Ntheta def}. 

 Our aim is to prove $|\mathbf{D_{0}}(S)|=\frac{2^{|S|} \varphi(d)}{ |\mathcal{D}_{0}|}$. The proofs for the other subgroups are similar. Recall from the definition of $\mathbf{D}_{0}(S)$ that an element $(f,\theta) \in \mathbf{D}_{0}(S)$ if $|T \cap N_{\theta}| \equiv |T \cap N_{\eta_{f}}| \Mod 2$, for all $T \in \mathcal{D}_{0}$. From \eqref{max sub} and \eqref{index-2 sbgp}, it is equivalent to say an element $(f,\theta) \in \mathbf{D}_{0}(S)$ if $\langle v, t \rangle = \langle x, t \rangle$, for all $t \in D_{0}$, where $\chi(N_{\eta_{f}})=x$ and $\chi(N_{\theta})=v$.
Therefore, it is enough to prove that, for every $f \in \mathbb{Z}_{d}^{*}$, 
\begin{equation*}
|E_{f}|= |\{v \in \mathbb{F}_{2}^{n}: \langle v, t \rangle= \langle x, t \rangle, \forall \ t \in D_{0} \}|= \frac{2^{|S|}}{|D_{0}|}, \ \text{where} \ \chi(N_{\eta_{f}})=x.
\end{equation*} 
Equivalently,
\begin{equation*}\label{number of lf eq}
|E_{f}|= |\{ v \in \mathbb{F}_{2}^{n}: \langle v+x, t \rangle=0, \forall \ t \in D_{0} \}|= \frac{2^{|S|}}{|D_{0}|}, \ \text{where} \ \chi(N_{\eta_{f}})=x. 
\end{equation*}
Now, following the same arguments as discussed in (Theorem 2, \cite{KBAM21}), we obtain $|E_{f}+x|= \frac{2^{|S|}}{|D_{0}|}$ from which we conclude $|E_{f}|= \frac{2^{|S|}}{|D_{0}|}$ holds for every $f \in \mathbb{Z}_{d}^{*}$.

\section{The degree of \texorpdfstring{$\mathbb{Q}(\sqrt{a_{1}}, \sqrt{a_{2}}, \dots, \sqrt{a_{n}}, \zeta_{d})$} \ \ over \texorpdfstring{$\mathbb{Q}$}.}\label{degree sec}
In this section, we give an exact formula for the degree of $\mathbb{Q}(\sqrt{a_{1}}, \sqrt{a_{2}}, \dots, \sqrt{a_{n}}, \zeta_{d})$ over $\mathbb{Q}$. First, we state the Chebotarev's Density Theorem as follows.

\vspace{2mm}
\noindent
\textbf{Chebotarev's Density Theorem. }
Let $\mathbb{K}/\mathbb{Q}$ be a Galois extension with Galois group $\Gal{\mathbb{K}/\mathbb{Q}}$. Let $C$ be a given conjugacy class in $\Gal{\mathbb{K}/\mathbb{Q}}$. For any rational prime $p$, let $\sigma_{p}$ be the Frobenius element in $\Gal{\mathbb{K}/\mathbb{Q}}$. Then the relative density of the primes $\{p : \sigma_{p} \in C\}$ is $\frac{|C|}{[\mathbb{K}: \mathbb{Q}]}$. 
\begin{lemma}\label{prime fact lem}
Let $m$ be a square-free integer and let $K= \mathbb{Q}(\sqrt{m})$ be a quadratic extension over $\mathbb{Q}$. Let $O_{K}$ be the ring of integers of $K$. Then for any odd prime $p \geq 3$, we have
\begin{enumerate}[label=(\roman*)]
\item $p$ ramifies in $O_{K}$ if and only if $p \mid m$.
\item $p$ splits completely in $O_{K}$ if and only if $\left(\frac{m}{p} \right) = 1$, or $m$ is a square modulo $p$.
\item $p$ is inert in $O_{K}$ if and only if $\left(\frac{m}{p} \right) = -1$, or $m$ is not a square modulo $p$.
\end{enumerate}
\end{lemma}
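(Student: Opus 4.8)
The plan is to reduce all three statements to the factorization of the polynomial $f(x) = x^2 - m$ modulo $p$, via the Kummer--Dedekind theorem. The one technical point to clear at the outset is that $O_K$ need not equal $\mathbb{Z}[\sqrt{m}]$: when $m \equiv 1 \pmod 4$ the index $[O_K : \mathbb{Z}[\sqrt{m}]]$ equals $2$. However, since we only consider odd primes $p \geq 3$, we have $p \nmid [O_K : \mathbb{Z}[\sqrt{m}]]$ in every case, so the Kummer--Dedekind theorem applies with the minimal polynomial $f(x) = x^2 - m$ of $\sqrt{m}$: the factorization type of $pO_K$ matches the factorization of $\overline{f}(x) = x^2 - m$ in $\mathbb{F}_p[x]$.

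With this in place, I would run the three-way case analysis according to how $x^2 - m$ factors mod $p$. First, $\overline{f}$ has a repeated root if and only if $p \mid \operatorname{disc}(f) = 4m$, which for odd $p$ is equivalent to $p \mid m$; in that case $\overline{f} = x^2$ and $pO_K = \mathfrak{p}^2$, giving (i). When $p \nmid m$ the polynomial is separable, and it splits into two distinct linear factors if and only if $m$ is a nonzero square modulo $p$, i.e. $\left(\frac{m}{p}\right) = 1$, yielding $pO_K = \mathfrak{p}_1 \mathfrak{p}_2$ with $\mathfrak{p}_1 \neq \mathfrak{p}_2$ (complete splitting), which is (ii); otherwise $\overline{f}$ is irreducible over $\mathbb{F}_p$, i.e. $\left(\frac{m}{p}\right) = -1$, and $pO_K = \mathfrak{p}$ stays prime, giving (iii).

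Alternatively, for the unramified cases (ii) and (iii) one can argue directly through the Frobenius element, tying in with the Chebotarev framework recalled above: writing $\Gal{K/\mathbb{Q}} = \{1, \tau\}$ with $\tau(\sqrt{m}) = -\sqrt{m}$, the prime $p$ splits if and only if $\sigma_p = 1$ and is inert if and only if $\sigma_p = \tau$. Since $\sigma_p(\alpha) \equiv \alpha^p \pmod{\mathfrak{p}}$ for $\alpha \in O_K$, taking $\alpha = \sqrt{m}$ and applying Euler's criterion gives $\sqrt{m}^{\,p} = m^{(p-1)/2}\sqrt{m} \equiv \left(\frac{m}{p}\right)\sqrt{m} \pmod{\mathfrak{p}}$, so $\sigma_p$ is trivial exactly when $\left(\frac{m}{p}\right) = 1$. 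The main (and essentially only) obstacle is the bookkeeping around the index $[O_K : \mathbb{Z}[\sqrt{m}]]$; once restricted to odd $p$ this evaporates, and the remainder is a routine translation between polynomial factorization mod $p$, the Legendre symbol, and the splitting type.
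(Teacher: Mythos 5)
Your proof is correct. Note that the paper itself states Lemma \ref{prime fact lem} without proof, treating it as a standard fact about splitting of rational primes in quadratic fields, so there is no in-paper argument to compare against; your Kummer--Dedekind derivation is the standard one, and you correctly dispatch the only delicate point, namely that the index $[O_K : \mathbb{Z}[\sqrt{m}]]$ divides $2$ and is therefore coprime to every odd prime $p$, so the factorization of $pO_K$ is read off from that of $x^2-m$ in $\mathbb{F}_p[x]$. Your alternative Frobenius argument for the unramified cases is also sound and meshes well with the Chebotarev framework the paper sets up immediately before this lemma.
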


\begin{thm}\label{degree thm}
Let $S=\{a_1,a_2,...,a_n\}$ be a finite set of non-zero integers. Let $\mathbb{K}=\mathbb{Q}(\sqrt{a_1},\sqrt{a_2},...,\sqrt{a_n},\zeta_d)$ be a multi-quadratic field compositum with cyclotomic extension, where $\zeta_d$ denotes the primitive $d$-th root of unity for $d\geq3$. Then, we have 
\begin{equation*}
[\mathbb{K}: \mathbb{Q}]= \frac{2^{n} \varphi(d)}{|\mathcal{D}_{0}|}.
\end{equation*}
\end{thm}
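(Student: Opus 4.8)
The plan is to prove this through the Chebotarev Density Theorem, reducing the computation of $[\mathbb{K}:\mathbb{Q}]$ to the density of rational primes that split completely in $\mathbb{K}$. First I would record that $\mathbb{K}$ is Galois over $\mathbb{Q}$, being the compositum of the two Galois (indeed abelian) extensions $L=\mathbb{Q}(\sqrt{a_1},\dots,\sqrt{a_n})$ and $M=\mathbb{Q}(\zeta_d)$. Applying Chebotarev to the trivial conjugacy class $C=\{1\}$, the density of primes $p$ with Frobenius $\sigma_p=1$ equals $\frac{1}{[\mathbb{K}:\mathbb{Q}]}$, so it suffices to show that the set of primes splitting completely in $\mathbb{K}$ has density $\frac{|\mathcal{D}_0|}{\varphi(d)2^{n}}$; inverting then yields the claimed degree.

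Next I would translate the condition ``splits completely in $\mathbb{K}$'' into the two conditions governing the residue-pattern density already established in Section \ref{cor of thm 1}. Since $\mathbb{K}=L\cdot M$ with $L,M$ Galois, an unramified prime $p$ splits completely in $\mathbb{K}$ if and only if it splits completely in both $L$ and $M$. Splitting completely in $M=\mathbb{Q}(\zeta_d)$ is equivalent to $p\equiv 1\Mod d$, while by Lemma \ref{prime fact lem} splitting completely in $L$ is equivalent to $\left(\frac{a_i}{p}\right)=1$ for every $i$, that is, to $S$ having the all-ones residue pattern $\theta\equiv 1$ modulo $p$. Hence the completely split primes are exactly those $p\equiv 1\Mod d$ for which $S$ is a set of quadratic residues, a set whose density was computed in Corollary \ref{res cor}.

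Finally I would invoke Corollary \ref{res cor} with $f=1$ and $\theta\equiv 1$: because $\left(\frac{\sqf(T)}{1}\right)=1$ for every $T$, the residue condition there is automatically satisfied, and the density of completely split primes equals $\frac{|\mathcal{D}_0|}{\varphi(d)2^{n}}$ in the case $8\mid d$ (with $\mathcal{D}_1$ and $\mathcal{D}_2$ replacing $\mathcal{D}_0$ in the cases $4\mid d,\,8\nmid d$ and $4\nmid d$ respectively). Taking the reciprocal gives $[\mathbb{K}:\mathbb{Q}]=\frac{2^{n}\varphi(d)}{|\mathcal{D}_0|}$.

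The step I expect to be the main obstacle is matching the density to the \emph{correct} subgroup for the given $d$: this is precisely where the distinction among $\mathcal{D}_0,\mathcal{D}_1,\mathcal{D}_2$ enters, since whether $\sqrt{\sqf(T)}\in\mathbb{Q}(\zeta_d)$ depends on the conductor of $\mathbb{Q}(\sqrt{\sqf(T)})$ and hence on $\sqf(T)\Mod 4$ together with the divisibility of $d$ by $4$ and $8$, as dictated by the law of quadratic reciprocity used in Lemma \ref{charsum at d lemma}. A purely algebraic alternative that avoids Chebotarev would instead use $[\mathbb{K}:\mathbb{Q}]=\frac{[L:\mathbb{Q}]\,[M:\mathbb{Q}]}{[L\cap M:\mathbb{Q}]}$, writing $[L:\mathbb{Q}]=\frac{2^{n}}{|R|}$ with $R=\{T:\sqf(T)=1\}$ and $[L\cap M:\mathbb{Q}]=\frac{|G|}{|R|}$ for $G=\{T:\sqrt{\sqf(T)}\in\mathbb{Q}(\zeta_d)\}$; there the crux is the identification $|G|=|\mathcal{D}_0|$, which again reduces to deciding exactly when $\sqrt{\sqf(T)}$ lies in $\mathbb{Q}(\zeta_d)$.
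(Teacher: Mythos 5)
Your proposal is correct and follows essentially the same route as the paper: both identify the completely split primes as those with $p\equiv 1\Mod d$ and $\left(\frac{a_i}{p}\right)=1$ for all $i$, compute their density via Corollary \ref{res cor} with $f=1$ and $\theta\equiv 1$, and equate this with $\frac{1}{[\mathbb{K}:\mathbb{Q}]}$ from Chebotarev applied to the trivial conjugacy class. The alternative algebraic argument you sketch at the end is not pursued in the paper, but the main argument matches.
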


Note that, in the statement of Theorem \ref{degree thm} we assume $8 \mid d$. The proof of the other two cases follows similarly.

\noindent
\begin{proof}
It is clear that if
$$f(x)= (x^{2}-a_{1}) (x^{2}-a_{2}) \dots (x^{2}-a_{n}) (x^{d}-1) \in \mathbb{Z}[x],$$
then $\mathbb{K}/\mathbb{Q}$ is the splitting field of $f(x)$. Let
$$\mathbf{P}_{1}:=\{p>2: p\equiv 1 \Mod d,\Big(\frac{a_i}{p}\Big)= 1, \ \forall \ 1\leq i \leq n\}.$$
Since $\big(\frac{\sqf(T)}{1}\big)=1$ for all $T \in \mathbf{P}(S)$, from Corollary \ref{res cor}, it follows that the density of $\mathbf{P}_{1}$ is $\frac{|\mathcal{D}_0|}{2^n\varphi(d)}.$ Now, using the Chebotarev's Density Theorem we calculate the relative density of $\mathbf{P}_{1}$. 

\vspace{1mm}
For $p \in \mathbf{P}_{1}$, we want to calculate the Frobenius element $\sigma_p \in \Gal{\mathbb{K}/\mathbb{Q}}$. It is enough to find the action of $\sigma_p$ on $\zeta_d$ and $\sqrt{a_i}$ for each $i$. Since $p \equiv 1 \Mod d$, $\sigma_{p}$ restricted to $\mathbb{Q}(\zeta_{d})$ is the trivial automorphism. Also, since $p \in \mathbf{P}_{1}$, it follows from Lemma \ref{prime fact lem} that $p$ splits completely in $\mathbb{Q}(\sqrt{a_{i}})$ for $1 \leq i \leq n$. Therefore, the Frobenius element $\sigma_p \in \Gal{\mathbb{K}/\mathbb{Q}}$ satisfies $\sigma_{p}(\zeta_{d})= \zeta_{d}$ and $\sigma_{p}(\sqrt{a_{i}})= \sqrt{a_{i}}$ for $1 \leq i \leq n$. Hence, $\sigma_{p}$ is uniquely defined in $\Gal{\mathbb{K}/\mathbb{Q}}$. By the Chebotarev Density Theorem, the relative density of $\mathbf{P}_{1}$ is $$\frac{1}{[\mathbb{K}: \mathbb{Q}]}=\frac{|\mathcal{D}_0|}{2^n\varphi(d)}.$$ This completes the proof of Theorem \ref{degree thm}.
\end{proof}

\section{The explicit Galois group of \texorpdfstring{$\mathbb{Q}(\sqrt{a_{1}}, \sqrt{a_{2}}, \dots, \sqrt{a_{n}}, \zeta_{d})$} \ \ over \texorpdfstring{$\mathbb{Q}$}.}\label{isomorphism sec}
In this section, we discuss the explicit structure of the Galois group of $\mathbb{Q}(\sqrt{a_{1}}, \sqrt{a_{2}}, \dots, \sqrt{a_{n}}, \zeta_{d})$ over $\mathbb{Q}$. Precisely, we prove the following Theorem.
\begin{thm}\label{isomorphism thm}
Let $S=\{a_1,a_2,...,a_n\}$ be a finite set of non-zero integers. Let $\mathbb{K}=\mathbb{Q}(\sqrt{a_1},\sqrt{a_2},...,\sqrt{a_n},\zeta_d)$ denotes a multi-quadratic field compositum with cyclotomic extension, where $\zeta_d$ denotes the primitive $d$-th root of unity for $d\geq3$. Let $\mathbf{D}_0(S)$ be defined as in \eqref{d-subgps defn}. Then there exists an explicit isomorphism between $\mathbf{D}_0(S)$ and $\Gal{\mathbb{K}/\mathbb{Q}}$.
\end{thm}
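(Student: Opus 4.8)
The plan is to build the isomorphism directly from the Galois action on the generators $\zeta_d,\sqrt{a_1},\dots,\sqrt{a_n}$ of $\mathbb{K}$ and then to match cardinalities using Theorem \ref{counting signs} and Theorem \ref{degree thm}. As in Theorem \ref{degree thm} I assume $8\mid d$, so that the relevant group is $\mathbf{D}_0(S)$; the cases $4\nmid d$ and $4\mid d,\,8\nmid d$ are identical with $\mathbf{D}_2(S)$, respectively $\mathbf{D}_1(S)$, in place of $\mathbf{D}_0(S)$.

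First I would define a map $\Phi:\Gal{\mathbb{K}/\mathbb{Q}}\to\mathbb{Z}_d^*\times\mathbf{F}(S)$. For $\sigma\in\Gal{\mathbb{K}/\mathbb{Q}}$, its restriction to $\mathbb{Q}(\zeta_d)$ lies in $\Gal{\mathbb{Q}(\zeta_d)/\mathbb{Q}}\cong\mathbb{Z}_d^*$, so $\sigma(\zeta_d)=\zeta_d^{\,f}$ for a unique $f=f(\sigma)\in\mathbb{Z}_d^*$; and since $\sigma(\sqrt{a_i})$ is a root of $x^2-a_i$ we may write $\sigma(\sqrt{a_i})=\theta_\sigma(a_i)\sqrt{a_i}$ with $\theta_\sigma(a_i)\in\{-1,1\}$, which defines $\theta_\sigma\in\mathbf{F}(S)$. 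Put $\Phi(\sigma)=(f(\sigma),\theta_\sigma)$. Evaluating $\sigma\tau$ on the two families of generators gives $f(\sigma\tau)=f(\sigma)f(\tau)$ and $\theta_{\sigma\tau}=\theta_\sigma\theta_\tau$, so $\Phi$ is a group homomorphism; and if $\Phi(\sigma)$ is trivial then $\sigma$ fixes every generator of $\mathbb{K}$, whence $\sigma=\mathrm{id}$ and $\Phi$ is injective.

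The heart of the argument---and the step I expect to be the main obstacle---is to show that $\Phi$ maps into $\mathbf{D}_0(S)$. Fix $\sigma$ with $\Phi(\sigma)=(f,\theta)$ and let $T\in\mathcal{D}_0$, so $\sqf(T)\mid d$. Writing $\prod_{s\in T}s=\sqf(T)\,k^2$ and applying $\sigma$ to $\prod_{s\in T}\sqrt{s}=\pm k\sqrt{\sqf(T)}$, the definition of $\theta_\sigma$ yields $\sigma(\sqrt{\sqf(T)})=\theta(T)\sqrt{\sqf(T)}$. On the other hand, because $8\mid d$ the condition $\sqf(T)\mid d$ forces the conductor of $\mathbb{Q}(\sqrt{\sqf(T)})$---namely $|\sqf(T)|$ if $\sqf(T)\equiv1\Mod4$ and $4|\sqf(T)|$ otherwise---to divide $d$, so $\sqrt{\sqf(T)}\in\mathbb{Q}(\zeta_d)$. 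The standard description of quadratic subfields of cyclotomic fields (via quadratic Gauss sums, equivalently the Kronecker symbol attached to the discriminant) then gives $\sigma(\sqrt{\sqf(T)})=\left(\frac{\sqf(T)}{f}\right)\sqrt{\sqf(T)}$, the Jacobi symbol being well defined since $8\mid d$ makes $f$ odd and $(f,\sqf(T))=1$. Comparing the two expressions forces $\theta(T)=\left(\frac{\sqf(T)}{f}\right)$, i.e.\ $\mu_f^\theta(T)=1$, for every $T\in\mathcal{D}_0$; hence $\Phi(\sigma)\in\mathbf{D}_0(S)$. Thus the defining constraints of $\mathbf{D}_0(S)$ are exactly the compatibility conditions imposed on $\sigma$ by the algebraic relations $\sqrt{\sqf(T)}\in\mathbb{Q}(\zeta_d)$.

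Finally I would conclude by counting. Theorem \ref{counting signs} gives $|\mathbf{D}_0(S)|=2^n\varphi(d)/|\mathcal{D}_0|$, while Theorem \ref{degree thm} gives $|\Gal{\mathbb{K}/\mathbb{Q}}|=[\mathbb{K}:\mathbb{Q}]=2^n\varphi(d)/|\mathcal{D}_0|$. Therefore $\Phi:\Gal{\mathbb{K}/\mathbb{Q}}\to\mathbf{D}_0(S)$ is an injective homomorphism between finite groups of equal order, and so is an isomorphism. The one genuinely delicate point remains the middle paragraph: simultaneously establishing the containment $\sqrt{\sqf(T)}\in\mathbb{Q}(\zeta_d)$ and identifying the action of the cyclotomic automorphism with the Jacobi symbol $\left(\frac{\sqf(T)}{f}\right)$, which is precisely what links the multiquadratic part of $\mathbb{K}$ to its cyclotomic part.
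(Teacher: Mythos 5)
Your proof is correct, but it runs in the opposite direction from the paper's and uses a different key mechanism. The paper defines a map $\Delta:\mathbf{D_{0}}(S)\rightarrow \Gal{\mathbb{K}/\mathbb{Q}}$ by sending $(f,\theta)$ to the Frobenius $\sigma_{p}$ of any prime $p\in\mathbf{P}(f,\theta)$; the existence of such primes is exactly Corollary \ref{inf primes cor}, i.e.\ the analytic Theorem \ref{residue thm}, and the constraints defining $\mathbf{D_{0}}(S)$ enter only through the nonvanishing of the density $C(S,\theta,d)$. You instead define the restriction map $\Phi:\sigma\mapsto(f(\sigma),\theta_{\sigma})$ and prove algebraically that it lands in $\mathbf{D_{0}}(S)$, the point being that for $T\in\mathcal{D}_{0}$ the relation $\sqrt{\sqf(T)}\in\mathbb{Q}(\zeta_{d})$ (conductor dividing $d$ because $8\mid d$) forces $\theta(T)=\left(\frac{\sqf(T)}{f}\right)$ via the Gauss-sum description of quadratic subfields of cyclotomic fields. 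Both arguments then finish identically, comparing $|\mathbf{D}_{0}(S)|$ from Theorem \ref{counting signs} with $[\mathbb{K}:\mathbb{Q}]$ from Theorem \ref{degree thm}. What your route buys is a conceptual explanation of \emph{why} the defining conditions of $\mathbf{D_{0}}(S)$ are precisely the algebraic relations linking the multiquadratic and cyclotomic parts of $\mathbb{K}$, and a construction of the isomorphism that needs no choice of auxiliary prime; what it costs is the conductor computation and the identification of the cyclotomic action with the Jacobi symbol $\left(\frac{\sqf(T)}{f}\right)$, which you correctly isolate as the delicate step and which is consistent with the paper's Lemma \ref{charsum at d lemma}. (Note that neither proof is free of the analytic input, since Theorem \ref{degree thm} itself is proved via Chebotarev; also, the paper's opening claim that $\mathbf{D}_{0}(S)\simeq\mathbb{F}_{2}^{t}\times\mathbb{Z}_{d}^{*}$ is rendered unnecessary by your argument, which never needs the abstract structure.)
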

Note that in the statement of Theorem \ref{isomorphism thm}, we assume $8 \mid d$. The proof for the other two cases follows similarly.

\noindent
\begin{proof}
Since $\mathbb{K}$ is a 2-elementary abelian extension compositum with cyclotomic extension of $\mathbb{Q}$, we have $\Gal{\mathbb{K}/\mathbb{Q}} \cong \mathbb{F}_2^t\times \mathbb{Z}_d^*$ for some $1\leq t\leq n$. On the other hand, since $\mathbf{F}(S) \simeq \mathbb{F}_2^n$, by using Lemma \ref{sg lemma}, we obtain that 
$\mathbf{D}_0(S)\simeq \mathbb{F}_2^t\times \mathbb{Z}_d^*$ for some $1\leq t\leq n$.
Also, it follows from Theorem \ref{counting signs} and Theorem \ref{degree thm} that $|\mathbf{D}_0(S)|=|\Gal{\mathbb{K}/\mathbb{Q}}|=\frac{2^n\varphi(d)}{|\mathcal{D}_0|}.$
Hence, we have $(\Gal{\mathbb{K}/\mathbb{Q}},\circ)\cong(\mathbf{D}_0(S),*)$. In the rest of the proof, we show that there exists an explicit injective homomorphism between $\Gal{\mathbb{K}/\mathbb{Q}}$ and $\mathbf{D}_0(S)$. 

\vspace{1mm}
\noindent
Let $$\mathbf{P}(f,\theta):=\{p>2: p\equiv f \Mod d,\Big(\frac{a_i}{p}\Big)= \theta (a_{i}), \ \forall \ 1\leq i \leq n\}.$$

By using Corollary \ref{inf primes cor}, it follows that $\mathbf{P}(f,\theta)$ contains infinitely many primes, for every $(f, \theta) \in \mathbf{D}_0(S).$ In fact, the relative density of $\mathbf{P}(f,\theta)$ over the set of all primes $\mathrm{P}$ is $\frac{|\mathcal{D}_0|}{2^n\varphi(d)}$. For any $p\in \mathbf{P}(f,\theta)$, we want to calculate the Frobenius element $\sigma_p\in \Gal{\mathbb{K}/\mathbb{Q}}$. Since $p \equiv f \Mod d$, the Frobenius element $\sigma_p(\zeta_d)=\zeta_d^f$.  Therefore, from the definition of $\mathbf{P}(f,\theta)$ and Lemma \ref{prime fact lem} it follows that for any $p \in \mathbf{P}(f, \theta)$ the Frobenius element $\sigma_p \in \Gal{\mathbb{K}/\mathbb{Q}}$ satisfies

\begin{equation*}
  \sigma_p(\sqrt{a_i})=\theta(a_i) \sqrt{a_i}, \ \ \text{for} \ 1\leq i \leq n \ \text {and} \ \sigma_p(\zeta_d)=\zeta_d^f.
\end{equation*}
Hence $\sigma_p$ is uniquely defined in $\Gal{\mathbb{K}/\mathbb{Q}}$. Now, we define a map $\Delta:\mathbf{D}_0(S) \rightarrow \Gal{\mathbb{K}/\mathbb{Q}}$ by 
 $$\Delta((f,\theta))=\sigma_p,  \ \text{for some } p\in \mathbf{P}(f,\theta).$$
 Clearly $\Delta$ is well defined and injective map. We claim that $\Delta$ is a homomorphism. Suppose $\Delta((f_1,\theta_1))=\sigma_{p_{1}}$ and $\Delta((f_2,\theta_2))=\sigma_{p_{2}}$, with $p_1\in \mathbf{P}(f_1,\theta_1)$ and $p_2\in \mathbf{P}(f_2,\theta_2)$, then
 \begin{equation*}
 \Delta((f_1,\theta_1))\circ \Delta((f_2,\theta_2))(\sqrt{a_i})=\sigma_{p_1}(\sigma_{p_2}(\sqrt{a_i}))=(\theta_{1}(a_{i}) \cdot \theta_{2}(a_{i})) \sqrt{a_i}, \  \text{for} \ 1\leq i\leq n
\end{equation*}
and $$\Delta((f_1,\theta_1))\circ \Delta((f_2,\theta_2))(\zeta_d)=\sigma_{p_1}(\sigma_{p_2}(\zeta_d))=\sigma_{p_1}(\zeta_d^{f_2})=\zeta_d^{f_1f_2 \Mod d}=\zeta_d^{f}, $$ where $f\equiv f_1f_2 \Mod d$. On the other hand, let $\Delta((f,\theta_{1}\cdot\theta_{2}))=\sigma_{p}$, for some  $p \in \mathbf{P}(f,\theta_1\cdot\theta_{2})$ and $f\equiv f_1f_2 \Mod d$.  
\begin{equation*}
 \sigma_p(\sqrt{a_i})=\theta_{1}\cdot \theta_{2}(a_{i}) (\sqrt{a_{i}})=(\theta_{1}(a_{i}) \cdot \theta_{2}(a_{i})) \sqrt{a_i}, \ \text{for} \ 1\leq i\leq n \ \ \text{and} \ \sigma_p(\zeta_d)=\zeta_d^{f}.
 \end{equation*}
Thus, we have
$$\Delta((f_1,\theta_1))\circ \Delta((f_2,\theta_2))=\Delta((f_1,\theta_1)\cdot(f_2,\theta_2)).$$
This completes the proof of Theorem \ref{isomorphism thm}.
\end{proof}

\begin{corollary}\label{multi cyclo extn}
Let $S=\{a_1,a_2,...,a_n\}$ be a finite set of non-zero integers and let $3 \leq d_{1} \leq \dots \leq d_{k}$ be integers with $d=$ lcm$(d_{1}, \dots, d_{k})$. Let $\mathbb{L}=\mathbb{Q}(\sqrt{a_1}, \dots ,\sqrt{a_n},\zeta_{d_{1}}, \dots , \zeta_{d_{k}})$ be a multi-quadratic field compositum with several cyclotomic extensions, where $\zeta_{d_{i}}$ denotes the primitive $d_{i}$-th root of unity for $i=1, \dots, k$. Let $\mathbf{D}_{i}(S)$ is defined as in \eqref{d-subgps defn}, for $d=$ lcm$(d_{1}, \dots, d_{k})$, $i=0,1,2$. Then, there is an explicit isomorphism between $\mathbf{D}_0(S)$ and $\Gal{\mathbb{L}/\mathbb{Q}}$.

Here,  we assume $8 \mid d$. The proof for the other two cases follows similarly.
\end{corollary}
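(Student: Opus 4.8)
The plan is to reduce Corollary \ref{multi cyclo extn} to Theorem \ref{isomorphism thm} by observing that the compositum of several cyclotomic fields is again a single cyclotomic field. The key elementary fact is that $\mathbb{Q}(\zeta_{d_1}, \dots, \zeta_{d_k}) = \mathbb{Q}(\zeta_d)$, where $d = \mathrm{lcm}(d_1, \dots, d_k)$. This is standard: each $\zeta_{d_i}$ lies in $\mathbb{Q}(\zeta_d)$ since $d_i \mid d$, so the compositum is contained in $\mathbb{Q}(\zeta_d)$; conversely, $\zeta_d$ can be written as a product of powers of the $\zeta_{d_i}$ (using a B\'ezout relation among the $d_i$), so $\mathbb{Q}(\zeta_d)$ is contained in the compositum. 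Hence the two fields coincide.

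With this identification in hand, the next step is simply to rewrite the field $\mathbb{L}$ as
\begin{equation*}
\mathbb{L} = \mathbb{Q}(\sqrt{a_1}, \dots, \sqrt{a_n}, \zeta_{d_1}, \dots, \zeta_{d_k}) = \mathbb{Q}(\sqrt{a_1}, \dots, \sqrt{a_n}, \zeta_d) = \mathbb{K},
\end{equation*}
where $\mathbb{K}$ is exactly the field studied in Theorem \ref{isomorphism thm} for the modulus $d = \mathrm{lcm}(d_1, \dots, d_k)$. Since $d \geq d_k \geq 3$, the hypotheses of Theorem \ref{isomorphism thm} apply (and under the running assumption $8 \mid d$ we are in the third case). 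I would then invoke Theorem \ref{isomorphism thm} directly: it furnishes an explicit isomorphism $\Delta : \mathbf{D}_0(S) \to \Gal{\mathbb{K}/\mathbb{Q}}$, and because $\mathbf{D}_0(S)$ in the statement of the corollary is defined via \eqref{d-subgps defn} with respect to this same $d$, the map $\Delta$ is precisely the desired explicit isomorphism between $\mathbf{D}_0(S)$ and $\Gal{\mathbb{L}/\mathbb{Q}} = \Gal{\mathbb{K}/\mathbb{Q}}$.

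There is essentially no hard obstacle here, since the corollary is a transparent specialization once the cyclotomic compositum is collapsed to a single cyclotomic field. The only point deserving care is to confirm that the subgroups $\mathcal{D}_i$ and the groups $\mathbf{D}_i(S)$ depend on $d_1, \dots, d_k$ only through $d = \mathrm{lcm}(d_1, \dots, d_k)$: this is immediate from their definitions in \eqref{d-subsets} and \eqref{d-subgps defn}, which reference only the single modulus $d$ through the divisibility condition $\sqf(T) \mid d$ and the group $\mathbb{Z}_d^*$. I would note that for the other two cases (namely $4 \nmid d$, and $4 \mid d$ with $8 \nmid d$) one argues identically, replacing $\mathbf{D}_0(S)$ and the corresponding case of Theorem \ref{isomorphism thm} by the appropriate variant, exactly as remarked after the statement of Theorem \ref{isomorphism thm}. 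This completes the reduction.
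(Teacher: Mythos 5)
Your proposal is correct and follows essentially the same route as the paper: the authors also reduce Corollary \ref{multi cyclo extn} to Theorem \ref{isomorphism thm} by collapsing the cyclotomic compositum to $\mathbb{Q}(\zeta_d)$ with $d=\mathrm{lcm}(d_1,\dots,d_k)$, citing (Corollary 4.2.8, \cite{SHWBook}) for the identity $\mathbb{Q}(\zeta_{d_1},\dots,\zeta_{d_k})=\mathbb{Q}(\zeta_d)$ that you instead verify directly via the B\'ezout relation. Your added remark that $\mathcal{D}_i$ and $\mathbf{D}_i(S)$ depend on the $d_j$ only through $d$ is a worthwhile explicit check that the paper leaves implicit.
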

\noindent
    The Proof of Corollary \ref{multi cyclo extn} follows similar to Theorem \ref{isomorphism thm} by using (Corollary 4.2.8, \cite{SHWBook}).
    
\section{The distinction between the algebraic cancellations coming from multi quadratic part and cyclotomic part}\label{alg can}  
Let $S=\{a_1,a_2,...,a_n\}$ be a finite set of non-zero integers. In \cite{BLT10}, it has been proved that the exact degree of $\mathbb{Q}(\sqrt{a_1},\sqrt{a_2},...,\sqrt{a_n})$ over $\mathbb{Q}$ is $\frac{2^{n}}{|\mathcal{H}(S)|}$, where
\begin{equation*}
\mathcal{H}=\mathcal{H}(S)=\big\lbrace T \subseteq S : \prod _{s \in T}s= m^{2}, \ \text{for some} \  m \in \mathbb{Z} \big\rbrace.
\end{equation*}

In \cite{KBAM21}, the first two authors pointed out that $(\mathcal{H}(S), \triangle)$ is a subgroup of $(\mathbf{P}(S), \triangle)$. In the following lemma, we will discuss about the cosets of $\mathcal{H}(S)$ in $\mathbf{P}(S)$. 
\begin{lemma}\label{coset lem}
Let $S$ be a finite set of non-zero integers. For any $S_{1} \in \mathbf{P}(S)\setminus \mathcal{H}(S)$, the left coset $S_{1}\cdot \mathcal{H}(S)$ is of the form
\begin{equation*}
\overline{S_{1}}=\left\lbrace T \in \mathbf{P}(S) : \sqf(T)=\sqf(S_{1}) \right\rbrace.
\end{equation*}
In other words, there are exactly $|\mathcal{H}(S)|$ elements of $\mathbf{P}(S)$ such that the product of all the elements of those subsets has the same squarefree part. 
\end{lemma}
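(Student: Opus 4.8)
The plan is to show the two descriptions of the coset coincide by a double inclusion, using the group structure established earlier together with the identity \eqref{sqf of sd of a coset}. First I would recall that $(\mathcal{H}(S), \triangle)$ is a subgroup of $(\mathbf{P}(S), \triangle)$, so the left coset of $S_{1}$ is simply $S_{1} \triangle \mathcal{H}(S) = \{ S_{1} \triangle H : H \in \mathcal{H}(S) \}$. The key observation is that a subset $H$ lies in $\mathcal{H}(S)$ precisely when $\sqf(H) = 1$, since $\prod_{s \in H} s$ being a perfect square is equivalent to its squarefree part being $1$.

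For the inclusion $S_{1} \triangle \mathcal{H}(S) \subseteq \overline{S_{1}}$, I would take any $H \in \mathcal{H}(S)$ and apply \eqref{sqf of sd of a coset} to compute
\begin{equation*}
\sqf(S_{1} \triangle H) = \sqf(\sqf(S_{1}) \cdot \sqf(H)) = \sqf(\sqf(S_{1}) \cdot 1) = \sqf(S_{1}),
\end{equation*}
so $S_{1} \triangle H \in \overline{S_{1}}$. For the reverse inclusion, suppose $T \in \mathbf{P}(S)$ satisfies $\sqf(T) = \sqf(S_{1})$. I would set $H := S_{1} \triangle T$ and verify $H \in \mathcal{H}(S)$ by another application of \eqref{sqf of sd of a coset}, namely $\sqf(H) = \sqf(\sqf(S_{1}) \cdot \sqf(T)) = \sqf(\sqf(S_{1})^{2}) = 1$; then $T = S_{1} \triangle H$ (using that $\triangle$ is its own inverse) exhibits $T$ as an element of the coset. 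This establishes $\overline{S_{1}} = S_{1} \triangle \mathcal{H}(S)$.

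The final sentence of the statement is then immediate from Lagrange's theorem: each coset of $\mathcal{H}(S)$ has exactly $|\mathcal{H}(S)|$ elements, and by the equality just proved these are precisely the subsets sharing the squarefree part $\sqf(S_{1})$. I do not anticipate a genuine obstacle here; the only point requiring mild care is the bookkeeping around \eqref{sqf of sd of a coset} and the characterization $H \in \mathcal{H}(S) \iff \sqf(H) = 1$, which is really the crux that makes both inclusions work. Since $\triangle$ is a commutative group operation with every element its own inverse, the distinction between left and right cosets is vacuous, so no additional argument is needed on that front.
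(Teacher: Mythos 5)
Your proof is correct and follows essentially the same double-inclusion argument as the paper; the only cosmetic difference is that you route both computations through the previously established identity \eqref{sqf of sd of a coset} and the characterization $H \in \mathcal{H}(S) \iff \sqf(H)=1$, whereas the paper redoes the product manipulation inline. The Lagrange-theorem remark for the cardinality statement matches what the paper leaves implicit.
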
 
\begin{proof}
Let $S_{1}$ be a nonempty subset of $S$ which is not in $\mathcal{H}(S)$. Consider
\begin{equation*}
S_{1}\cdot\mathcal{H}(S)= \{ S_{1} \triangle T : T \in \mathcal{H}(S) \ \}.
\end{equation*}
We observe that
\begin{equation*}
\sqf\left(\prod_{s \in S_{1} \triangle T}s\right)= \sqf\left(\left(\prod_{s \in S_{1} \triangle T}s\right)\left(\prod_{s \in S_{1} \cap T}s\right)^{2}\right)= \sqf \left(\left(\prod_{s \in S_{1}}s\right)\left(\prod_{s \in T}s\right)\right)=\sqf \left(\prod_{s \in S_{1}}s\right)
\end{equation*}
for all $T \in \mathcal{H}(S)$. Thus, we have $S_{1} \cdot \mathcal{H}(S) \subseteq \overline{S_{1}}$.

\noindent
Suppose $S_{2} \in \overline{S_{1}}$ with $S_{2} \neq S_{1}$. Now, we want show that there exists $T \in \mathcal{H}(S)$ such that $S_{1} \triangle T=S_{2}$. Consider
\begin{equation*}
\text{sqf}\left(\prod_{s \in S_{1} \triangle S_{2}}s\right)=\text{sqf}\left(\left(\prod_{s \in S_{1} \triangle S_{2}}s\right) \left(\prod_{s \in S_{1} \cap S_{2}}s\right)^{2}\right)=\text{sqf}\left(\left(\prod_{s \in S_{1}}s\right) \left( \prod_{s \in S_{2}}s\right)\right)=1,
\end{equation*}
which implies $S_{1}\triangle S_{2} \in \mathcal{H}(S)$. We choose $T=S_{1} \triangle S_{2}$. Thus, we have $S_{1} \triangle T= S_{1} \triangle (S_{1} \triangle S_{2})= S_{2}$. Therefore, $\overline{S_{1}} \subseteq S_{1} \cdot \mathcal{H}(S)$. This completes the proof of Lemma \ref{coset lem}.
\end{proof}

\vspace{1mm}
Consider the quotient group $\overline{\mathbf{P}(S)}=\mathbf{P}(S)/\mathcal{H}(S)$ with a binary operation $\overline{S_{1}}\cdot \overline{S_{2}}=\overline{S_{1} \triangle S_{2}}$ for $\overline{S_{1}}, \overline{S_{2}} \in \overline{\mathbf{P}(S)}$. It follows from Lemma \ref{coset lem} that any two subsets of $S$ has the same squarefree part if and only if they lie in the same coset of the subgroup $\mathcal{H}(S)$.  Thus, $\sqf(\overline{T})$ is well defined and equal to $\sqf(T)$ for every $\overline{T} \in \overline{\mathbf{P}(S)}$. Conventionally, we write the squarefree part of a perfect square is $1$, hence we assume $\sqf(\overline{\phi})=1$. Thus, the quotient groups corresponding to the subgroups $\mathcal{D}_{i}$, for $i=0,1,2,$ in $\overline{\mathbf{P}(S)}$ are given by
\begin{align*}
\overline{\mathcal{D}_{0}}=& \left\{\overline{T} \in \overline{\mathbf{P}(S)}: |\sqf(\overline{T})| \divides d\right\}\nonumber\\
\overline{\mathcal{D}_{1}}=& \left\{\overline{T} \in \overline{\mathbf{P}(S)}: |\sqf(\overline{T})| \divides d \ \text{and} \ \sqf(\overline{T}) \equiv 1 \ \text{or} \ 3 \Mod 4 \right\}\nonumber\\
\overline{\mathcal{D}_{2}}=& \left\{\overline{T} \in \overline{\mathbf{P}(S)}: |\sqf(\overline{T})| \divides d \ \text{and} \ \sqf(\overline{T}) \equiv 1 \Mod 4 \right\}.
\end{align*}
Therefore, by using Theorem \ref{degree thm} the exact degree of $\mathbb{K}$ over $\mathbb{Q}$ is $\frac{2^{n} \varphi(d)}{|\overline{\mathcal{D}_{0}}||\mathcal{H}(S)|},$ where $|\mathcal{H}(S)|$ accounts for the algebraic cancellation that arises from multi quadratic part and $|\overline{\mathcal{D}_{0}}|$ accounts for the algebraic cancellation arising from cyclotomic part.

\bibliographystyle{plain}  

\end{document}